\title[The universal finite set]{The universal finite set}
\author{Joel David Hamkins}
 \address[Joel David Hamkins]
         {Professor of Logic, Oxford University and Sir Peter Strawson Fellow in Philosophy, University College, Oxford
         \fancyampersand\ Professor of Mathematics, Philosophy, and of Computer Science, The Graduate Center of The City University of New York \fancyampersand\ Professor of Mathematics, College of Staten Island of CUNY}
\email{jhamkins@gc.cuny.edu}
\urladdr{http://jdh.hamkins.org}
\author{W.~Hugh Woodin}
 \address[W.~Hugh Woodin]
        {Professor of Philosophy and of Mathematics, Department of Philosophy, Emerson Hall 209A, Harvard University,
25 Quincy Street Cambridge, MA 02138}
 \email{woodin@math.harvard.edu}
\thanks{Commentary can be made about this article on the first author's blog at \href{http://jdh.hamkins.org/the-universal-finite-set}{http://jdh.hamkins.org/the-universal-finite-set}.}
\newtheorem{theorem}{Theorem}
\newtheorem*{maintheorem*}{Main Theorem}
\newtheorem*{maintheorems*}{Main Theorems}
\newtheorem{corollary}[theorem]{Corollary}
\newtheorem*{corollary*}{Corollary}
\newtheorem*{corollaries*}{Corollaries}
\newtheorem{lemma}[theorem]{Lemma}
\newtheorem*{question*}{Question}
\newtheorem*{questions*}{Questions}
\newtheorem*{mainquestion*}{Main Question} 
\newtheorem*{openquestion*}{Open Question} 
\newtheorem{observation}[theorem]{Observation}
\newcommand{\QED}{\end{proof}}
\def\proclaim[#1]{{\bf #1}}
\def\BF#1.{{\bf #1.}}
\def\says#1:#2\par{\item[#1] #2\par}
\newcommand{\Los}{\L o\'s}
\newcommand{\Godel}{G\"odel}
\newcommand{\Lowe}{L\"owe}
\newcommand{\Levy}{L\'{e}vy}
\newcommand{\N}{{\mathbb N}}
\newcommand{\dotminus}{\mathbin{\text{\@dotminus}}}
\newcommand{\@dotminus}{%
  \ooalign{\hidewidth\raise1ex\hbox{.}\hidewidth\cr$\m@th-$\cr}%
}
\newcommand{\of}{\subseteq}
\newcommand{\set}[1]{\{\,{#1}\,\}}
\newcommand{\satisfies}{\models}
\newcommand{\proves}{\vdash}
\DeclareMathOperator{\possible}{\text{\tikz[scale=.6ex/1cm,baseline=-.6ex,rotate=45,line width=.1ex]{\draw (-1,-1) rectangle (1,1);}}}
\DeclareMathOperator{\necessary}{\text{\tikz[scale=.6ex/1cm,baseline=-.6ex,line width=.1ex]{\draw (-1,-1) rectangle (1,1);}}}
\newcommand{\theoryf}[1]{{\rm #1}}
\newcommand{\Union}{\bigcup}
\newcommand{\smalllt}{\mathrel{\mathchoice{\raise2pt\hbox{$\scriptstyle<$}}{\raise1pt\hbox{$\scriptstyle<$}}{\raise0pt\hbox{$\scriptscriptstyle<$}}{\scriptscriptstyle<}}}
\newcommand{\smallleq}{\mathrel{\mathchoice{\raise2pt\hbox{$\scriptstyle\leq$}}{\raise1pt\hbox{$\scriptstyle\leq$}}{\raise1pt\hbox{$\scriptscriptstyle\leq$}}{\scriptscriptstyle\leq}}}
\newcommand{\boolval}[1]{\mathopen{\lbrack\!\lbrack}\,#1\,\mathclose{\rbrack\!\rbrack}}
\def\[#1]{\boolval{#1}}
\newbox\gnBoxA
\newdimen\gnCornerHgt
\newdimen\gnArgHgt
\def\gcode #1{%
\setbox\gnBoxA=\hbox{$#1$}%
\gnArgHgt=\ht\gnBoxA%
\ifnum     \gnArgHgt<\gnCornerHgt \gnArgHgt=0pt%
\else \advance \gnArgHgt by -\gnCornerHgt%
\fi \raise\gnArgHgt\hbox{\tiny$\ulcorner$} \box\gnBoxA %
\raise\gnArgHgt\hbox{\tiny$\urcorner$}}
\newcommand{\UnderTilde}[1]{{\setbox1=\hbox{$#1$}\baselineskip=0pt\vtop{\hbox{$#1$}\hbox to\wd1{\hfil$\sim$\hfil}}}{}}
\newcommand{\Undertilde}[1]{{\setbox1=\hbox{$#1$}\baselineskip=0pt\vtop{\hbox{$#1$}\hbox to\wd1{\hfil$\scriptstyle\sim$\hfil}}}{}}
\newcommand{\undertilde}[1]{{\setbox1=\hbox{$#1$}\baselineskip=0pt\vtop{\hbox{$#1$}\hbox to\wd1{\hfil$\scriptscriptstyle\sim$\hfil}}}{}}
\newcommand{\UnderdTilde}[1]{{\setbox1=\hbox{$#1$}\baselineskip=0pt\vtop{\hbox{$#1$}\hbox to\wd1{\hfil$\approx$\hfil}}}{}}
\newcommand{\Underdtilde}[1]{{\setbox1=\hbox{$#1$}\baselineskip=0pt\vtop{\hbox{$#1$}\hbox to\wd1{\hfil\scriptsize$\approx$\hfil}}}{}}
\newcommand{\fancyampersand}{{\usefont{OT1}{cmr}{m}{it} \&}}
\renewcommand{\iff}{\mathrel{\leftrightarrow}}
\def\<#1>{\left\langle#1\right\rangle}
\newcommand{\Ord}{\mathord{{\rm Ord}}}
\newcommand{\ZFC}{{\rm ZFC}}
\newcommand{\ZF}{{\rm ZF}}
\newcommand{\GCH}{{\rm GCH}}
\newcommand{\HOD}{{\rm HOD}}
\newcommand{\PA}{{\rm PA}}
\newcommand{\cell}[1]{\boxit{\hbox to 17pt{\strut\hfil$#1$\hfil}}}
\newcommand{\head}[2]{\lower2pt\vbox{\hbox{\strut\footnotesize\it\hskip3pt#2}\boxit{\cell#1}}}
\newcommand{\boxit}[1]{\setbox4=\hbox{\kern2pt#1\kern2pt}\hbox{\vrule\vbox{\hrule\kern2pt\box4\kern2pt\hrule}\vrule}}
\newcommand{\Col}[3]{\hbox{\vbox{\baselineskip=0pt\parskip=0pt\cell#1\cell#2\cell#3}}}
\newcommand{\tapenames}{\raise 5pt\vbox to .7in{\hbox to .8in{\it\hfill input: \strut}\vfill\hbox to
.8in{\it\hfill scratch: \strut}\vfill\hbox to .8in{\it\hfill output: \strut}}}
\newcommand{\Head}[4]{\lower2pt\vbox{\hbox to25pt{\strut\footnotesize\it\hfill#4\hfill}\boxit{\Col#1#2#3}}}
\newcommand{\Dots}{\raise 5pt\vbox to .7in{\hbox{\ $\cdots$\strut}\vfill\hbox{\ $\cdots$\strut}\vfill\hbox{\
$\cdots$\strut}}}
\renewcommand{\UrlFont}{\sffamily\smaller} 
\addcolon\nolinkurl{#1}}\iffieldundef{eprintclass}{}{\UrlFont{\mkbibbrackets{\thefield{eprintclass}}}}}
\addcolon\nolinkurl{#1}\iffieldundef{eprintclass}{}{\UrlFont{\mkbibbrackets{\thefield{eprintclass}}}}}}
\newcommand\Val{\mathord{\rm Val}}
\newcommand{\setrandomcolor}{%
  \definecolor{randomcolor}{RGB}{\pdfuniformdeviate 256,\pdfuniformdeviate 256,\pdfuniformdeviate 256}%
}
\begin{document}

\begin{abstract}
We define a certain finite set in set theory $\set{x\mid\varphi(x)}$ and prove that it exhibits a universal extension property: it can be any desired particular finite set in the right set-theoretic universe and it can become successively any desired larger finite set in top-extensions of that universe. Specifically, \ZFC\ proves the set is finite; the definition $\varphi$ has complexity $\Sigma_2$, so that any affirmative instance of it $\varphi(x)$ is verified in any sufficiently large rank-initial segment of the universe $V_\theta$; the set is empty in any transitive model and others; and if $\varphi$ defines the set $y$ in some countable model $M$ of \ZFC\ and $y\of z$ for some finite set $z$ in $M$, then there is a top-extension of $M$ to a model $N$ in which $\varphi$ defines the new set $z$. Thus, the set shows that no model of set theory can realize a maximal $\Sigma_2$ theory with its natural number parameters, although this is possible without parameters. Using the universal finite set, we prove that the validities of top-extensional set-theoretic potentialism, the modal principles valid in the Kripke model of all countable models of set theory, each accessing its top-extensions, are precisely the assertions of \theoryf{S4}. Furthermore, if \ZFC\ is consistent, then there are models of \ZFC\ realizing the top-extensional maximality principle.
\end{abstract}

\maketitle

\section{Introduction}\label{Section.Introduction}

The second author~\cite{Woodin2011:A-potential-subtlety-concerning-the-distinction-between-determinism-and-nondeterminism} established the universal algorithm phenomenon, showing that there is a Turing machine program with a certain universal top-extension property in models of arithmetic. Namely, the program provably enumerates a finite set of natural numbers, but it is relatively consistent with \PA\ that it enumerates any particular desired finite set of numbers, and furthermore, if $M$ is any model of \PA\ in which the program enumerates the set $s$ and $t$ is any (possibly nonstandard) finite set in $M$ with $s\of t$, then there is a top-extension of $M$ to a model $N$ in which the program enumerates exactly the new set $t$. So it is a universal finite computably enumerable set, which can in principle be any desired finite set of natural numbers in the right arithmetic universe and become any desired larger finite set in a suitable larger arithmetic universe.\footnote{Woodin's theorem appears in~\cite{Woodin2011:A-potential-subtlety-concerning-the-distinction-between-determinism-and-nondeterminism}; Blanck and Enayat~\cite{BlanckEnayat2017:Marginalia-on-a-theorem-of-Woodin, Blanck2017:Dissertation:Contributions-to-the-metamathematics-of-arithmetic} removed the restriction to countable models and extended the result to weaker theories; Hamkins provided a simplified proof in~\cite{Hamkins:The-modal-logic-of-arithmetic-potentialism}; Shavrukov had reportedly circulated a similar argument privately, pointing out that it can be seen as an instance of the Berarducci~\cite{Berarducci1990:The-interpretability-logic-of-PA} construction.}

The first author~\cite{Hamkins.blog2017:The-universal-definition} inquired whether there is a set-theoretic analogue of this phenomenon, using $\Sigma_2$ definitions in set theory in place of computable enumerability. The idea was that just as a computably enumerable set is one whose elements are gradually revealed as the computation proceeds, a $\Sigma_2$-definable set in set theory is precisely one whose elements become verified at some level $V_\theta$ of the cumulative set-theoretic hierarchy as it grows. In this sense, $\Sigma_2$ definability in set theory is analogous to computable enumerability in arithmetic.

\begin{mainquestion*}[Hamkins]\label{Question.Universal-definition?}
 Is there a universal $\Sigma_2$ definition in set theory, one which can define any desired particular set in some model of \ZFC\ and always any desired further set in a suitable top-extension?
\end{mainquestion*}

The first author had noticed in~\cite{Hamkins.blog2017:The-universal-definition} that one can do this using a $\Pi_3$ definition, or with a $\Sigma_2$ definition, if one restricts to models of a certain theory, such as $V\neq\HOD$ or the eventual \GCH, or if one allows $\set{x\mid\varphi(x)}$ sometimes to be a proper class.

Here, we provide a fully general affirmative answer with the following theorem.

\goodbreak
\begin{maintheorem*}
There is a formula $\varphi(x)$ of complexity $\Sigma_2$ in the language of set theory, provided in the proof, with the following properties:
\begin{enumerate}
  \item \ZFC\ proves that $\set{x\mid \varphi(x)}$ is a finite set.
  \item In any transitive model of \ZFC\ and others, this set is empty.
  \item If $M$ is a countable model of \ZFC\ in which $\varphi$ defines the set $y$ and $z\in M$ is any finite set in $M$ with $y\of z$, then there is a top-extension of $M$ to a model $N$ in which $\varphi$ defines exactly $z$.
\end{enumerate}
\end{maintheorem*}

Notice that the main theorem provides a universal \emph{finite} set, rather than a universal set as in the main question, but we should like to emphasize that this actually makes for a stronger result, since the union of the universal finite set will be a universal set---the union of an arbitrary finite set, after all, is an arbitrary set. Similarly, by taking the union of just the countable members of the universal finite set, one achieves a universal countable set, and other kinds of universal sets are achieved in the same way for other cardinalities or requirements. The proof will show that one can equivalently formulate the main theorem in terms of finite sequences, rather than finite sets, so that the sequence is extended arbitrarily as desired in the top-extension. The sets $y$ and $z$ in statement (3) may be nonstandard finite, if $M$ is $\omega$-nonstandard.

In the final section, we shall apply our analysis to the theory of top-extensional set-theoretic potentialism. In particular, we shall show that the top-extensional potentialist validities of the countable models of set theory, for assertions with parameters, are precisely the modal assertions of \theoryf{S4}. We shall also provide models of the top-extensional maximality principle, models $M$ of \ZFC, for which whenever a sentence $\sigma$ is true in some top-extension of $M$ and all further top-extensions, then it is already true in $M$ and all its top-extensions.

\section{Background}\label{Section.Classical-background}

Let us briefly review some background classical results and constructions, which we shall make use of in the proof of the main theorem.

\subsection{Top-extensions}

A \emph{top-extension} of a model of set theory $\<M,\in^M>$, also known as a \emph{rank-extension}, is another model $\<N,\in^N>$ where $M$ is a submodel of $N$ and all the new sets in $N$ have rank above any ordinal of $M$. So every $V_\alpha^M$ is the same as $V_\alpha^N$, for ordinals $\alpha\in \Ord^M$. This is a \emph{topped}-extension, if there is a least new ordinal in $N\setminus M$.

One can similarly define a notion of top-extension for models of arithmetic, where the former model is an initial segment of the latter model, and in fact every model of \PA, whether countable or uncountable, has a nontrivial elementary top-extension. This theorem goes back in spirit perhaps to the 1934 definable ultrapower construction of Skolem~\cite{Skolem1934:Uber-die-Nicht-charakterisierbarkeit-der-Zahlenreihe} (reprinted in~\cite{Skolem1970:Selected-works-in-logic}), providing a top-extension of the standard model of arithmetic. Many years later, Mac~Dowell and Specker~\cite{MacDowellSpecker1961:Modelle-der-Arithmetik} proved the general case, handling arbitrary models of \PA, including uncountable models. Keisler and Morley~\cite{KeislerMorley1968:ElementaryExtensionsOfModelsOfSetTheory} extended the analysis to models of set theory, showing that every model of \ZF\ of countable cofinality (in particular, every countable model) has arbitrarily large elementary top-extensions.

\begin{theorem}[Keisler, Morley]\label{Theorem.Every-countable-model-has-elementary-top-extension}
  Every countable model of \ZFC\ has an elementary top-extension.
\end{theorem}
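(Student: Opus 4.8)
The plan is to realize the top-extension as a \emph{definable ultrapower} of $M$, in the spirit of the Mac~Dowell--Specker construction for models of arithmetic. Working over the class $\Ord^M$ of ordinals of $M$, I would consider the collection of functions $f\colon\Ord^M\to M$ that are definable in $M$ with parameters, and form the quotient $\Ult(M,U)=\set{[f]_U\mid f\text{ definable}}$ modulo an ultrafilter $U$ on the Boolean algebra of definable subclasses of $\Ord^M$, where $f=_U g$ just in case $\set{\alpha\mid f(\alpha)=g(\alpha)}\in U$ and membership is interpreted analogously. Because only definable functions are used, each relevant truth set $\set{\alpha\mid M\models\psi(f_1(\alpha),\dots,f_n(\alpha))}$ is again a definable subclass of $\Ord^M$; so provided $U$ decides every definable subclass, the \Los\ theorem applies and the map $a\mapsto[c_a]_U$ carrying $a$ to the class of its constant function is an elementary embedding of $M$ into a model $N=\Ult(M,U)$ of \ZFC. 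Identifying $M$ with its image yields $M\prec N$.

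Two further features must be arranged: that the extension is \emph{proper} and that it is a \emph{top}-extension. Both will follow from requiring $U$ to extend the \emph{tail filter} on $\Ord^M$, generated by the final segments $\set{\alpha\mid\alpha\geq\beta}$ for $\beta\in\Ord^M$. Indeed, for each $\beta$ the set $\set{\alpha\mid\id(\alpha)=\alpha\geq\beta}$ then lies in $U$, so $[\id]_U$ is an ordinal of $N$ strictly above every old ordinal $\beta=[c_\beta]_U$, giving a new ordinal. The rank-initial property is the heart of the matter: I must guarantee that every element of $N$ whose rank is \emph{bounded} by an old ordinal already lies in $M$. Concretely, if $f$ is definable with $\set{\alpha\mid\rank(f(\alpha))<\beta}\in U$ for some fixed $\beta\in\Ord^M$, I want $f=_U c_v$ for some $v\in V_\beta^M$, whence $[f]_U\in M$; this secures $V_\alpha^N=V_\alpha^M$ for every $\alpha\in\Ord^M$.

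To construct such a $U$, I would build it by a recursion of length $\omega$, using crucially that $M$ is countable. Enumerate the definable functions $\Ord^M\to M$ as $\<f_n\mid n<\omega>$ and build a descending chain of \emph{cofinal} definable classes $C_0\supseteq C_1\supseteq\cdots$ with $C_0=\Ord^M$, taming $f_n$ at stage $n$ by the following dichotomy, proved inside $M$. If $f_n$ maps a cofinal subclass of $C_n$ into some fixed $V_\beta^M$, then by a class form of the pigeonhole principle---a definable function from $\Ord^M$ into a fixed set cannot have all of its fibers bounded---some single value $v$ has cofinal fiber, and I thin $C_{n+1}$ into that fiber, recording $f_n$ as $U$-constant. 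Otherwise $\set{\alpha\mid\rank(f_n(\alpha))<\beta}$ is bounded for every $\beta$, and I thin $C_{n+1}$ so that $\rank(f_n(\alpha))$ is strictly increasing and cofinal in $\Ord^M$. Setting $U=\set{X\mid X\text{ definable and }C_n\setminus X\text{ is bounded for some }n}$ then gives a proper filter (every finite intersection of the cofinal $C_n$ is cofinal) that extends the tail filter and, via the characteristic functions among the $f_n$, decides every definable subclass; the constant-value clauses deliver exactly the rank-initial dichotomy of the previous paragraph, so any $[f]_U$ of bounded rank is $U$-constant and hence in $M$.

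I expect the main obstacle to be the bookkeeping that keeps this recursion coherent: one must verify the class-pigeonhole lemma and check, using \ZFC\ in $M$, that in the second case the current cofinal condition can be definably thinned to make the rank of $f_n$ strictly increasing and unbounded, and that none of the infinitely many successive thinnings destroys cofinality. Granting this, the \Los\ theorem together with the tail-filter condition assembles into the conclusion that $N=\Ult(M,U)$ is an elementary top-extension of $M$, exactly as Theorem~\ref{Theorem.Every-countable-model-has-elementary-top-extension} asserts.
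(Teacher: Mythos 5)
Your construction is essentially the one the paper sketches: a definable ultrapower of $M$ by an ultrafilter on the definable classes of ordinals, built in $\omega$ steps using countability, concentrating on final segments, and arranged so that every definable function that is bounded (in rank) on a large class is constant on a large class. There is, however, one genuine gap. You assert that ``provided $U$ decides every definable subclass, the \Los\ theorem applies.'' The existential step of \Los\ for a \emph{definable} ultrapower does not come for free: if $\set{\alpha\mid M\models\exists y\,\psi(f(\alpha),y)}\in U$, you must produce a \emph{definable} function $g$ with $\psi(f(\alpha),g(\alpha))$ holding on a $U$-large class, and this requires definable Skolem functions, i.e.\ a definable global well-order. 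A model of \ZFC\ need not have one (taking ``the set of minimal-rank witnesses'' gives a definable set-valued selection but no definable choice from it), so the argument as written stalls precisely at elementarity. The paper's fix, which you should adopt, is to first force to add a global well-order by a class forcing that adds no new sets, and then take the ultrapower using functions and classes definable with the new predicate; since no sets are added, a top-extension of the expanded structure is still a top-extension of $M$, and global choice now yields \Los. With that single repair, the rest of your bookkeeping---the descending chain of cofinal classes $C_n$, the class pigeonhole via replacement, and the tail-filter argument for properness and rank-initiality---is sound and matches the paper's proof.
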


\begin{proof}[Proof sketch]
The theorem can be proved with the definable ultrapower method. After first forcing to add a global well-order, without adding sets, one then enumerates all the definable classes of ordinals in the model and all definable functions $f:\Ord\to M$, allowing the global well-order to appear in the definitions. In a construction with $\omega$-many steps, one constructs an $M$-ultrafilter $U$ on $\Ord^M$, which decides every definable class of ordinals in $M$, concentrates on final segments of $\Ord^M$, and has the property that every definable function $f:\Ord^M\to M$ that is bounded on a set in $U$ is constant on a set in $U$. One can simply build the ultrafilter step-by-step, using the pigeon-hole principle to make the bounded functions constant. Global choice is used to establish the \Los\ theorem for the definable ultrapower, which then provides the desired elementary top-extension.
\end{proof}

Keisler and Silver~\cite{KeislerSilver1971:End-extensions-models-of-set-theory} showed that the result is not true in general for uncountable models, for $\<V_\kappa,\in>$ has no elementary top-extension when $\kappa$ is the least inaccessible cardinal (and the weakly compact cardinals are characterized by similar extension properties, with predicates). Enayat~\cite[theorem 1.5(b)]{Enayat1984:OnCertainElementaryEndExtensionsOfModelsOfSetTheory} showed that indeed every consistent extension $T$ of \ZFC\ has a model of size $\aleph_1$ with no top-extension to a model of \ZFC\ (verified also for \ZF\ in unpublished work). That the rather classless models have no top-extensions is due to Kaufman and Enayat (Kaufman showed it for $\kappa$-like models, where $\kappa$ is regular; Enayat~\cite{Enayat1984:OnCertainElementaryEndExtensionsOfModelsOfSetTheory} proved the general case, with a different proof). Because uncountable models of set theory need not necessarily have top-extensions, one cannot expect a version of the main theorem for arbitrary uncountable models, even though the universal algorithm result holds for arbitrary models of arithmetic, including uncountable models.

In the countable case of theorem~\ref{Theorem.Every-countable-model-has-elementary-top-extension}, one cannot in general find a topped-extension, because the pointwise definable (see~\cite{HamkinsLinetskyReitz2013:PointwiseDefinableModelsOfSetTheory}) and more generally the Paris models can have no topped-extension, for such an extension would recognize the previous model as pointwise definable or a Paris model and therefore realize that it has only countably many countable ordinals (see~\cite[theorem 3.11]{Enayat2005:ModelsOfSetTheoryWithDefinableOrdinals}). For this reason, one cannot expect in the proof above to construct a \emph{normal} ultrafilter, since the resulting definable ultrapower would be topped.

\subsection{Locally verifiable properties are the $\Sigma_2$ properties}

The principal motivation for the use of $\Sigma_2$ in the main question lies in the characterization of the $\Sigma_2$ properties in set theory as those that are locally verifiable, in the sense that their truth can be verified by checking certain facts in only a bounded part of the set-theoretic universe, such as inside some rank-initial segment $V_\theta$ or inside the collection $H_\kappa$ of all sets of hereditary size less than $\kappa$. So let us review the basic facts.

\begin{lemma}[Folklore] For any assertion $\varphi$ in the language of set theory, the following are equivalent:
  \begin{enumerate}
    \item $\varphi$ is ZFC-provably equivalent to a $\Sigma_2$ assertion.
    \item $\varphi$ is ZFC-provably equivalent to an assertion of the form ``\,$\exists \theta\, V_\theta\models\psi$,'' where $\psi$ can be a statement of any complexity.
    \item $\varphi$ is ZFC-provably equivalent to an assertion of the form ``\,$\exists \kappa\, H_\kappa\models\psi$,'' where $\psi$ can be a statement of any complexity.
  \end{enumerate}
\end{lemma}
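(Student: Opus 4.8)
The plan is to prove the three-way equivalence by establishing the cycle $(1)\Rightarrow(3)\Rightarrow(2)\Rightarrow(1)$, concentrating the genuine content in the passage to the $H_\kappa$ form $(1)\Rightarrow(3)$ and treating the rest as absoluteness bookkeeping. Throughout, ``\,$\ZFC$-provably equivalent'' is read schematically, one provable biconditional per formula $\varphi$. I would first fix a normal form: a $\Sigma_2$ assertion $\varphi$ may be written $\exists a\,\pi(a)$ where $\pi$ has the form $\forall b\,\delta(a,b)$ with $\delta$ bounded ($\Delta_0$), so that $\pi$ is $\Pi_1$.

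For $(1)\Rightarrow(3)$ the key tool is \Levy\ absoluteness: for every regular uncountable cardinal $\kappa$ one has $H_\kappa\prec_{\Sigma_1}V$, and hence $\Pi_1$ assertions with parameters in $H_\kappa$ are absolute between $H_\kappa$ and $V$. Granting this, I would argue that $\ZFC$ proves $\varphi\iff\exists\kappa\,[\,\kappa\text{ regular uncountable}\wedge H_\kappa\models\varphi\,]$. For the forward direction, if $a$ witnesses $\varphi$ in $V$, choose a regular uncountable $\kappa$ with $a\in H_\kappa$; since $\pi(a)$ is $\Pi_1$ and true in $V$, downward absoluteness of $\Pi_1$ gives $H_\kappa\models\pi(a)$ and thus $H_\kappa\models\varphi$. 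Conversely, if $H_\kappa\models\varphi$ for such a $\kappa$, a witness $a\in H_\kappa$ with $H_\kappa\models\pi(a)$ yields $V\models\pi(a)$ by the upward $\Pi_1$-absoluteness supplied by \Levy's theorem, so $\varphi$ holds in $V$. To match the stated form $\exists\kappa\,H_\kappa\models\psi$, one must fold the side condition ``$\kappa$ regular uncountable'' into the single formula $\psi$; since for the fixed formula $\varphi$ the relevant instance of $\Sigma_1$-absoluteness is validated by only finitely many instances of Collection, I would take $\psi$ to be the conjunction of $\varphi$ with a sufficient finite fragment $\sigma$ of $\ZFC^-$, using that $H_\kappa\models\sigma$ forces exactly the closure needed to run the absoluteness argument for $\delta$.

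For $(3)\Rightarrow(2)$ I would observe that $H_\kappa$ is a definable set inside any $V_\theta$ with $\theta$ large above $\kappa$, and that $(H_\kappa)^{V_\theta}=H_\kappa$ and the evaluation $H_\kappa\models\psi$ are absolute for such $\theta$; hence setting $\psi'$ to be the assertion $\exists\kappa\,(H_\kappa\models\psi)$ gives $\exists\theta\,V_\theta\models\psi'\iff\exists\kappa\,H_\kappa\models\psi\iff\varphi$, which is the $V_\theta$ form. For $(2)\Rightarrow(1)$ I would use the standard complexity computation that ``\,$w$ is some $V_\theta$'' is $\Pi_1$ (transitivity together with appropriate closure, the graph $y=\mathcal P(x)$ being $\Pi_1$), while for a fixed $\psi$ the relativized satisfaction $\psi^w$ is bounded in the parameter $w$; thus $\exists\theta\,V_\theta\models\psi$ unwinds to $\exists w\,[\,(w\text{ is some }V_\theta)\wedge\psi^w\,]$, of the shape $\exists w\,(\Pi_1\wedge\Delta_0)$, hence $\Sigma_2$. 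The analogous observation that ``\,$w$ is some $H_\kappa$'' is $\Pi_1$ yields $(3)\Rightarrow(1)$ directly, though it is not needed for the cycle.

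I expect the main obstacle to be the $V_\theta$ form (2). One cannot simply take $\psi=\varphi$ and hope that $\varphi\iff\exists\theta\,V_\theta\models\varphi$: the forward direction holds by downward absoluteness of $\Pi_1$, but the converse fails, since a witness $a$ with $\forall b\in V_\theta\,\delta(a,b)$ need not satisfy $\forall b\,\delta(a,b)$ in all of $V$, as $\Pi_1$ assertions are not upward absolute from an arbitrary $V_\theta$ to $V$. This is precisely why I would route the argument through $H_\kappa$, where \Levy\ absoluteness furnishes genuine two-way $\Pi_1$-absoluteness, and only then transfer into the $V_\theta$ hierarchy by internalizing $H_\kappa$. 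The one other point demanding care is the regularity side condition: because $H_\kappa$ cannot detect singularity of $\kappa$ by any single sentence, a careless choice of $\psi$ would admit spurious witnesses at bad $\kappa$, and the fix is the finite-fragment-of-Collection device described above, which guarantees that every $\kappa$ satisfying $H_\kappa\models\psi$ is closed enough for the absoluteness to apply.
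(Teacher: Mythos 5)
Your proposal is correct and follows essentially the same route as the paper: the cycle $(1)\Rightarrow(3)\Rightarrow(2)\Rightarrow(1)$ with \Levy\ absoluteness $H_\kappa\prec_{\Sigma_1}V$ carrying the weight in $(1)\Rightarrow(3)$, internalizing $H_\kappa$ into a sufficiently large $V_\theta$ for $(3)\Rightarrow(2)$, and a direct $\Sigma_2$ complexity count of ``\,$\exists\theta\,V_\theta\models\psi$'' for $(2)\Rightarrow(1)$ (the paper phrases the last step via an explicit cumulative-hierarchy sequence $\exists\theta\,\exists v$ with a $\Pi_1$ matrix rather than your ``$w$ is some $V_\theta$ is $\Pi_1$,'' but the computation is the same). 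Your extra care in folding the ``uncountable cardinal'' side condition into $\psi$ is a legitimate refinement of a point the paper passes over silently.
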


\begin{proof}
($3\to 2$) Since $H_\kappa$ is correctly computed inside $V_\theta$ for any $\theta>\kappa$, it follows that to assert that some $H_\kappa$ satisfies $\psi$ is the same as to assert that some $V_\theta$ thinks that there is some cardinal $\kappa$ such that $H_\kappa$ satisfies $\psi$.

($2\to 1$) The statement $\exists \theta\, V_\theta\models\psi$ is equivalent to the assertion that there is an ordinal $\theta$ and a sequence $v=\<v_\alpha\mid\alpha\leq\theta>$ of sets fulfilling the definition of the cumulative $V_\alpha$ hierarchy, namely, $v_0=\emptyset$, every $v_{\alpha+1}$ is the power set of $v_\alpha$ and $v_\lambda=\Union_{\alpha<\lambda}v_\alpha$ at limit ordinals $\lambda$, and finally such that $v_\theta\satisfies\psi$. This assertion has complexity $\Sigma_2$, since it begins with $\exists\theta\exists v$ and uses a universal quantifier to assert $s_{\alpha+1}=P(s_\alpha)$; the final clause $v_\theta\satisfies\psi$ has complexity $\Delta_0$, since all quantifiers of $\psi$ become bounded by $v_\theta$.

($1\to 3$) We make use of the \Levy\ absoluteness theorem, which asserts for any uncountable cardinal $\kappa$ that $H_\kappa\prec_{\Sigma_1} V$, a fact that can be proved by the Mostowski collapse of a witness in $V$ to find a suitable witness inside $H_\kappa$. For the theorem, consider any $\Sigma_2$ assertion $\exists x\,\forall y\, \varphi_0(x,y)$, where $\varphi_0$ has only bounded quantifiers. This assertion is equivalent to $\exists\kappa\, H_\kappa\models\exists x\,\forall y\,\varphi_0(x,y)$, where $\kappa$ is an uncountable cardinal, simply because if there is such a $\kappa$ with $H_\kappa$ having such an $x$, then by \Levy\ absoluteness, this $x$ works for all $y\in V$ since $H_\kappa\prec_{\Sigma_1}V$; and conversely, if there is an $x$ such that $\forall y\, \varphi_0(x,y)$, then this will remain true inside any $H_\kappa$ with $x\in H_\kappa$.
\end{proof}

Once a $\Sigma_2$ fact is witnessed in a model of set theory $M$, therefore, it follows that it remains true in all top-extensions of $M$. Thus, a $\Sigma_2$ definable set is enumerated as the cumulative $V_\theta$ hierarchy grows.

\subsection{Coding into the \GCH\ pattern}

Easton's theorem allows us to control precisely the pattern of success and failure of the generalized continuum hypothesis \GCH\ at the regular cardinals. For example, for any $a\of\omega$, we can find a forcing extension $V[G]$ where $n\in a$ if and only if the \GCH\ holds at $\aleph_n$. More generally, for any set of ordinals $A\of\gamma$, let us say that $A$ is coded into the \GCH\ pattern starting at $\delta$, if $2^\delta=\delta^{+(\gamma)}$ and then $A$ is coded into the \GCH\ pattern on the next $\gamma$ many successor cardinals, so that $\alpha\in A$ if and only if the \GCH\ holds at $\delta^{+(\gamma+\alpha+1)}$, for all $\alpha<\gamma$. And again, Easton's theorem shows how to code any desired set $A$ into the \GCH\ pattern of a forcing extension, starting at any desired regular cardinal $\delta$ and without adding bounded sets to $\delta$. Since every set is coded by a set of ordinals, this method provides a way to encode any desired set into the \GCH\ pattern of a forcing extension.

\subsection{\Godel-Carnap fixed point lemma}

Nearly all logicians are familiar with the \Godel\ fixed-point lemma, which asserts that for any formula $\varphi(x)$ in the language of arithmetic, there is a sentence $\psi$ for which $\PA\proves\psi\iff\varphi(\gcode{\psi})$. One may easily adapt the lemma to the language of set theory, using a weak fragment of set theory in place of \PA. The usual proof also works when $\varphi(x,\vec y)$ has additional free variables, in which case the fixed point is a formula $\psi(\vec y)$ with $\forall \vec y[\psi(\vec y)\iff\varphi(\gcode{\psi},\vec y)]$, and this formulation is known as the \Godel-Carnap fixed-point lemma. One can use this latter form, as we shall in the proof of the main theorem, to define a function $n\mapsto(\beta,k,y)$ by specifying its graph $\psi(n,\beta,k,y)$, which holds when $\varphi(\gcode{\psi},n,\beta,k,y)$. That is, one defines the function by a property that makes reference to the defining formula itself. The \Godel-Carnap fixed-point lemma ensures that indeed there is such a formula $\psi$.

\section{Special cases of the main theorem}\label{Section.Special-cases}

We shall approach the main theorem by first proving some special cases of it, specifically by providing separate definitions of the universal finite set for the cases of countable $\omega$-nonstandard models and countable $\omega$-standard models. Later, in section~\ref{Section.Proof-of-main-theorem}, we shall explain how to merge these two definitions into a single universal definition that will establish the main theorem by working with all countable models simultaneously.

\subsection{Process A}

Consider the following process, proceeding in finite stages, which will establish the main theorem for the case of $\omega$-nonstandard models of set theory. At stage $n$, if the earlier stages were successful, we search for an ordinal $\beta_n$, a natural number $k_n$ and a finite set $y_n$ fulfilling a certain locally verifiable property, and if we find them, we shall say that stage $n$ is successful and declare $\varphi_A(a)$ to hold of every $a\in y_n$. The ordinals $\beta_n$ will be increasing
 $$\beta_0<\beta_1<\cdots<\beta_n,$$
while the natural numbers, crucially, will be decreasing
 $$k_0>k_1>\cdots>k_n.$$
It follows that there can be only finitely many successful stages, and consequently the set $\set{a \mid\varphi_A(a)}$ ultimately defined by $\varphi_A$ will be finite. The process itself amounts to the map $n\mapsto(\beta_n,k_n,y_n)$, and our definition of this map will make explicit reference to the outcome of the map defined in the same way in various other models of set theory. This may seem initially to be a circular definition, but the \Godel-Carnap fixed-point lemma, explained in section~\ref{Section.Classical-background}, shows that indeed there is a definition solving this recursion; the situation is analogous to typical uses of the Kleene recursion theorem in the constructions of computability theory. Fix a computable enumeration of the theory \ZFC, and let $\ZFC_k$ denote the theory resulting from the first $k$ axioms in this enumeration.

To begin, stage $0$ is successful if there is a pair $\<\beta_0,k_0>$, minimal respect to the lexical order, such that $\beta_0$ is a $\beth$-fixed point and the structure $\<V_{\beta_0},\in>$ has no topped-extension to a model $\<N,\in^N>$ of $\ZFC_{k_0}$ satisfying the assertion, ``stage $0$ succeeds with $\beta_0$ and is the last successful stage, and this remains true after any $P(\beta_0)$-preserving forcing.'' In this case, let $y_0$ be the finite set coded into the \GCH\ pattern starting at $\beta_0^+$, if indeed there is a finite set coded there (take $y_0=\emptyset$ otherwise), and we declare $\varphi_A(a)$ to be true for all $a\in y_0$.

More generally, stage $n$ is successful, if all the previous stages were successful and we find a $\beth$-fixed point $\beta_n$ above all previous $\beta_i$ and a natural number $k_n$ less than all previous $k_i$ for $i<n$, taking $\<\beta_n,k_n>$ to be minimal in the lexical order, such that the structure $\<V_{\beta_n},\in>$ has no topped-extension to a model $\<N,\in^N>$ of $\ZFC_{k_n}$ satisfying the assertion, ``stage $n$ succeeds with $\beta_n$ and is the last successful stage, and this remains true after any $P(\beta_n)$-preserving forcing.'' In this case, let $y_n$ be the finite set coded into the \GCH\ pattern starting at $\beta_n^+$, using $y_n=\emptyset$ if there is no finite set coded there, and declare $\varphi_A(a)$ to hold for all $a\in y_n$.

Let us verify several facts about this construction. First, at each stage the question of whether or not there is such a model $\<N,\in^N>$ as described in the process is verifiable in $V_{\beta_n+1}$ and the encoded set $y_n$ is revealed in any sufficiently large $V_\theta$. So the map $n\mapsto (\beta_n,k_n,y_n)$ has complexity $\Delta_2$ and $\varphi_A$ therefore has complexity $\Sigma_2$. Because the $k_n$'s are descending in the natural numbers, there can be only finitely many successful stages, and so $\set{a\mid\varphi_A(a)}$ is finite.

We claim that if $M$ is a model of \ZFC\ in which stage $n$ is successful, then the number $k_n$ must be nonstandard and in particular, $M$ must be an $\omega$-nonstandard model. Since the $k$-sequence is descending, we might as well consider only the last successful stage in $M$. Note that if stage $n$ is successful, then this remains true after any $P(\beta_n)$-preserving forcing, with the same $\beta_n$ and $k_n$, since such forcing cannot create fundamentally new models $N$ of size $\beta_n$, and so by moving to the relevant successive forcing extensions and increasing $n$ if necessary, we may also assume without loss that $n$ is the last successful stage in $M$ and remains the last successful stage in any $P(\beta_n)$-preserving forcing extension of $M$. For stage $n$ to have been successful, it means by definition that $M$ thinks there is no topped-extension of $\<V_{\beta_n},\in>$ to a model $\<N,\in^N>$ satisfying $\ZFC_{k_n}$ in which stage $n$ succeeds with $\beta_n$, is the last successful stage, and this remains true after all $P(\beta_n)$-preserving forcing. But since $n$ actually did succeed with $\beta_n$, was the last successful stage in $M$ and this was preserved by this kind of forcing, however, it follows by the reflection theorem that for any standard $k$ there are many ordinals $\theta>\beta_n$ for which $V_\theta$ satisfies $\ZFC_k$ and agrees that $n$ was the last successful stage, that it succeeded with $\beta_n$ and that this is preserved by all $P(\beta_n)$-preserving forcing. Since $V_\theta$ is a topped-extension of $V_{\beta_n}$, this means $k<k_n$ for all such $k$ and so $k_n$ must be nonstandard. In particular, process $A$ has no successful stages at all in an $\omega$-standard model of \ZFC.

Let us now verify the universal top-extension property. Suppose that $M$ is a countable $\omega$-nonstandard model of \ZFC\ in which $\varphi_A$ happens to define the set $y$ and that $z$ is a (possibly nonstandard) finite set in $M$ with $y\of z$. Let $n$ be the first unsuccessful stage in $M$, and let $M^+$ be any countable elementary top-extension of $M$ (not necessarily topped); such a model exists by the arguments of section~\ref{Section.Classical-background}. Let $\beta$ be any $\beth$-fixed point of $M^+$ above $M$ and let $k$ be any nonstandard natural number of $M^+$ below all $k_i$ for $i<n$. Since stage $n$ was the first unsuccessful stage in $M$, it follows by elementarity that this is also true in $M^+$. In particular, stage $n$ did not succeed with $\beta$ and $k$, even though $\beta>\beta_i$ and $k<k_i$ for $i<n$. In order for this stage to have been unsuccessful---and this is the key step of the argument, explaining why the definition is the way that it is---it must have been that there is a topped-extension of $\langle V_\beta^{M^+},\in\rangle$ to a model $\<N,\in^N>$ satisfying $\ZFC_k$ and the assertion, ``stage $n$ is successful with $\beta$ and is the last successful stage, and this remains true after any $P(\beta)$-preserving forcing,'' for otherwise the lack of such a model $N$ would have caused stage $n$ to be successful in $M^+$. So $\beta=\beta_n^N$. Since $k$ is nonstandard, the model $\<N,\in^N>$ is a model of the actual \ZFC, and since it top-extends $V_{\beta}^{M^+}$, it also top-extends the original model $M$. So we have found a top-extension of $M$ in which stage $n$ is the last successful stage and this is preserved by $P(\beta)$-preserving forcing. Let $N[G]$ be a forcing extension of $N$ in which the set $z$ is coded into the \GCH\ pattern starting at $\beta^+$. (Since $N$ is countable, we may easily find an $N$-generic filter $G$, and the Boolean ultrapower construction leads to the forcing extension $N[G]$ with ground model $N$, even when $N$ is nonstandard; see~\cite{HamkinsSeabold:BooleanUltrapowers} for further explanation of forcing over ill-founded models.) Since this forcing preserves $P(\beta)$, it follows that $N[G]$ thinks stage $n$ is the last successful stage. And since we've now coded $z$ into the \GCH\ pattern starting at $\beta^+$, and $y\of z$, it follows that $N[G]$ thinks $\set{a\mid \varphi_A(a)}$ is precisely $z$. So we have achieved the universal top-extension property, as desired, for the case of $\omega$-nonstandard models.

\subsection{Process B}

Let us now describe an alternative process, which will work with the $\omega$-standard models (we have already pointed out that process $A$ has no successful stages in such models).

Stage $0$ of process $B$ is successful, if there is a $\beth$-fixed point $\gamma_0$ such that after forcing to collapse $\gamma_0$ to $\omega$, the now-countable structure $\<V_{\gamma_0},\in>$ has no topped-extension to a model $\<N,\in^N>$ satisfying \ZFC\ (not a fragment) and the assertion, ``stage $0$ is successful with $\gamma_0$ and is the last successful stage, and this remains true after any $P(\gamma_0)$-preserving forcing.'' This is a true $\Pi^1_1$ assertion in the collapse extension $V[g_0]$, which is therefore equivalent to the well-foundedness of the canonically associated tree, which has some rank $\lambda_0$ in $V[g_0]$; by homogeneity this does not depend on the generic filter. In this case, taking the pair $\<\gamma_0,\lambda_0>$ to be lexically least, we let $y_0$ be the finite set coded into the \GCH\ pattern starting at $\gamma_0^+$, or $\emptyset$ if there is not a finite set coded there, and declare that $\varphi_B(a)$ holds for all $a\in y_0$.

More generally, stage $n$ of process $B$ is successful, if there is a $\beth$-fixed point $\gamma_n$ larger than all previous $\gamma_i$ and an ordinal $\lambda_n$ strictly smaller than all previous $\lambda_i$ for $i<n$, such that in the collapse extension of $\gamma_n$ to $\omega$, the now-countable structure $\<V_{\gamma_n},\in>$ has no topped-extension to a model $\<N,\in^N>$ satisfying (full) \ZFC\ and the assertion, ``stage $n$ succeeds with $\gamma_n$ and is the last successful stage, and this remains true after any $P(\gamma_n)$-preserving forcing,'' such that furthermore the rank of the canonical well-founded tree witnessing this as a true $\Pi^1_1$ assertion is precisely $\lambda_n$. In this case, taking $\<\gamma_n,\lambda_n>$ to be lexically least, we interpret the \GCH\ pattern starting at $\gamma_n^+$ to code a finite set $y_n$, and we declare $\varphi_B(a)$ to hold for all $a\in y_n$.

This completes the description of process $B$. Let us prove that it has the desired properties. The map $n\mapsto (\gamma_n,\lambda_n,y_n)$ has a graph with complexity $\Delta_2$, since any sufficiently large $V_\theta$ can verify whether or not $(\gamma,\lambda,y)$ are as desired, and so the definition $\varphi_B(a)$ has complexity $\Sigma_2$. Since the $\lambda_n$-sequence is strictly descending, there will be only finitely many successful stages and therefore $\set{a\mid\varphi_B(a)}$ will be finite.

If stage $n$ is successful in an $\omega$-standard model $M$, then we claim that the associated ordinal $\lambda_n$ must come from the ill-founded part of the model, and in particular, $M$ must not be well-founded. To see this, it suffices to consider the case that $n$ is the last successful stage in $M$ and furthermore, by moving to finitely many successive forcing extensions, if necessary, that this is preserved by any $P(\gamma_n)$-preserving forcing. Thus, $M$ itself is a topped-extension of $V_{\gamma_n}^M$ satisfying \ZFC\ and the assertion, ``stage $n$ succeeds with $\gamma_n$ and is the last successful stage, and this remains true after any $P(\gamma_n)$-preserving forcing.'' So the tree whose well-foundedness is equivalent to the non-existence of such a model cannot be actually well-founded, and so the rank $\lambda_n$ of that tree must be in the ill-founded part of $M$. In particular, in any transitive model of \ZFC, there are no successful stages at all in process $B$.

Let us now verify the universal top-extension property for countable $\omega$-standard models. Suppose that $M$ is a countable $\omega$-standard model of \ZFC\ in which $\varphi_B$ happens to define the set $y$ and that $z$ is a finite set in $M$ with $y\of z$. Let $n$ be the first unsuccessful stage in $M$, and let $M^+$ be any countable elementary top-extension of $M$, not necessarily topped. Let $\gamma$ be any $\beth$-fixed point of $M^+$ above $M$. We claim that there is a topped-extension of the structure $\langle V_\gamma^{M^+},\in\rangle$ to a countable model $\<N,\in^N>$ of \ZFC\ satisfying the assertion, ``stage $n$ succeeds with $\gamma$ and is the last successful stage, and this remains true after any $P(\gamma)$-preserving forcing.'' Suppose toward contradiction that this is not true. Then in particular, in the forcing extension $M^+[g]$ collapsing $\gamma$ to $\omega$, there is no such model. Let $T$ be the tree whose well-foundedness is canonically equivalent to the nonexistence of such a model, and let $\lambda$ be the rank of this tree. Since $M$ is an $\omega$-model, this is the same tree as considered in $M^+[g]$ during process $B$. Since $T$ is actually well-founded in $V$, as the statement that there is no such model $N$ was true in $V$ by assumption, it follows that $T$ is also well-founded in $M^+[g]$, and furthermore, the ranks must agree. So $\lambda$ is an ordinal in the well-founded part of $M$, and consequently $\lambda<\lambda_i$ for all $i<n$, since we have previously noted that those ordinals, if defined, reside in the ill-founded part of $M$. But this situation shows that stage $n$ must be successful in $M^+$, contrary to the choice of $n$ as the first unsuccessful stage there. So our initial assumption must have been false, and therefore indeed, there must be such a countable model $N$ that is a topped-extension of $V_\gamma^{M^+}$ in which stage $n$ succeeds at $\gamma$ and is the last successful stage, and this remains true in all $P(\gamma)$-preserving forcing extensions. Let $N[G]$ be a forcing extension of $N$ that codes the set $z$ into the \GCH\ pattern starting at $\gamma_{n}^+$. Since the earlier stages succeed at stages in $M$, far below $\gamma$, the only new sets fulfilling $\varphi_B$ in $N[G]$ will come from stage $n$, and this will include exactly the elements of $z$. So in $N[G]$, the set $\set{a\mid\varphi_B(a)}$ is precisely the set $z$, as desired for the universal top-extension property.

\section{Proof of the main theorem}\label{Section.Proof-of-main-theorem}

We shall now merge the two processes of section~\ref{Section.Special-cases} into a single process that provides a $\Sigma_2$ definition with the universal top-extension property stated in the main theorem.

\begin{proof}[Proof of the main theorem] We describe process $C$, which simply runs processes $A$ and $B$ concurrently, except that the results of process $B$ are accepted only if no stage of $A$ has yet been successful, that is, as the rank hierarchy $V_\theta$ grows. In other words, once process $A$ is successful, then process $C$ proceeds further only with process $A$. Let $\varphi(a)$ hold if $a$ is accepted by process $C$.

It follows that any instance of $\varphi(a)$ is verified in some $V_\theta$ and so $\varphi$ has complexity $\Sigma_2$. Since processes $A$ and $B$ accept only finitely many objects, it follows that $\set{a\mid\varphi(a)}$ will also be a finite set. We have mentioned that neither process $A$ nor $B$ accepts any sets in a transitive model of \ZFC, and so the set $\set{a\mid\varphi(a)}$ is empty in any such model.

Finally, we verify the top-extension property. Suppose that $M$ is a countable model of \ZFC\ in which process $\varphi$ happens to define the finite set $y$, and that $z$ is a finite set in $M$ with $y\of z$. If $M$ is $\omega$-standard, then process $A$ will have no successful stages, and so process $C$ will amount in this case to process $B$, which we have already proved has the top-extension property. So there is a top-extension in which $\varphi$ defines exactly the set $z$.

If $M$ is $\omega$-nonstandard and process $A$ has had at least one successful stage in $M$, then process $C$ will continue only with process $A$, which has the top-extension property for $\omega$-nonstandard models. So we can find a top-extension $N$ in which process $A$ and hence also $C$ accepts exactly the elements of $z$, as desired.

The remaining case occurs when $M$ is $\omega$-nonstandard, but process $A$ has not yet had a successful stage $0$. Perhaps process $B$ has been successful or perhaps not. Let $M^+$ be an elementary top-extension of $M$, not necessarily topped, and let $\beta$ be any $\beth$-fixed point of $M^+$ above all the ordinals of $M$ and let $k$ be any nonstandard natural number of $M$. By elementarity, $\varphi$ defines the set $y$ in $M^+$ and process $A$ has not had a successful stage $0$ in $M^+$. In particular, this stage did not succeed with $\<\beta,k>$, and therefore there must be a topped-extension of $\langle V_\beta^{M^+},\in\rangle$ to a model $\<N,\in^N>$ in $M^+$ that satisfies $\ZFC_k$ and thinks stage $0$ is successful in process $A$ with $\beta$ and is the last successful stage, and this is preserved by further $P(\beta)$-preserving forcing. Let $N[G]$ be a forcing extension of $N$ in which $z$ is coded into the \GCH\ pattern starting at $\beta^+$. This forcing is $P(\beta)$-preserving, and so stage $0$ of process $A$ is successful at $\beta$ in $N[G]$ and is the last successful stage. Note that process $B$ has had no additional successful stages below $\beta$ in $N[G]$, except those that were already successful in $M$, since $N[G]$ agrees with $M^+$ up to $V_\beta$. Thus, process $C$ does nothing new in $N[G]$ except the now successful stage $0$ of process $A$ using $\beta$, and since $z$ is the set coded into the \GCH\ pattern at $\beta^+$, it follows that $\varphi$ will define precisely the set $z$ in $N[G]$, thereby providing the desired top-extension of $M$.
\end{proof}

Our use of \GCH\ coding in the proof of the main theorem can be replaced with essentially any of the other standard coding methods, such as $\Diamond^*_\kappa$ coding~\cite{Brooke-Taylor2009:LargeCardinalsAndDefinableWellOrders}. All that is needed about the coding is that one can force to code any given set starting at any given cardinal, while not adding bounded sets to that cardinal. By using the alternative coding methods, one could for example arrange to have \GCH\ in the extension models, provided the original model had \GCH.

\section{Maximal $\Sigma_2$ theories}

The existence of the universal finite set provided by the main theorem has a bearing on the question of maximal $\Sigma_2$ theories in models of set theory. A model of set theory $M\satisfies\ZFC$ has a \emph{maximal} $\Sigma_2$ theory, if the $\Sigma_2$ fragment of the theory of $M$ is a maximal consistent $\Sigma_2$ extension of \ZFC. In other words, any $\Sigma_2$ assertion $\sigma$ that is consistent with $\ZFC$ plus the $\Sigma_2$ theory of $M$ is already true in $M$.

\begin{observation}\label{Observation.Maximal-Sigma_2-theory}
 If there is a model of \ZFC, then there is a model of \ZFC\ with a maximal $\Sigma_2$ theory.
\end{observation}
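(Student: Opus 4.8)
The plan is to build a maximal consistent set of $\Sigma_2$ sentences over \ZFC\ by a Lindenbaum-style argument and then take any model of it. The key realization is that, in contrast to the parametrized case, the parameter-free version is soft and needs nothing beyond compactness and completeness; the main theorem is not required for this positive direction, but rather explains the failure of the analogous statement \emph{with} parameters.

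First I would fix the collection of all sets $T$ of $\Sigma_2$ sentences in the language of set theory, with no parameters, for which $\ZFC + T$ is consistent, ordered by inclusion. The hypothesis that there is a model of \ZFC\ yields $\Con(\ZFC)$, so this collection is nonempty, containing at least $T = \emptyset$. Next I would verify the hypothesis of Zorn's lemma: given a chain $\langle T_i\rangle$ of such theories, the union $\bigcup_i T_i$ is again consistent with \ZFC, since by the compactness theorem it suffices to check finite subsets, and any finite subset of the union already lies in a single $T_i$ because the $T_i$ form a chain. Thus the union is an upper bound, and Zorn's lemma produces a maximal such theory $T$.

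Finally I would apply the completeness theorem to obtain a model $M \models \ZFC + T$ and check that the $\Sigma_2$ theory of $M$ is exactly $T$. That $T$ is contained in the $\Sigma_2$ theory of $M$ is immediate, since $M \models T$ and every member of $T$ is a $\Sigma_2$ sentence. For the reverse inclusion, suppose $\sigma$ is a $\Sigma_2$ sentence with $M \models \sigma$; then $M$ itself witnesses that $\ZFC + T + \sigma$ is consistent, so $T \cup \{\sigma\}$ lies in the collection above, and maximality of $T$ forces $\sigma \in T$. Hence the $\Sigma_2$ theory of $M$ equals $T$, a maximal consistent $\Sigma_2$ extension of \ZFC, which is exactly what it means for $M$ to have a maximal $\Sigma_2$ theory.

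I do not anticipate a genuine obstacle in the argument itself, which is entirely syntactic; the only point demanding care is conceptual. One must read ``maximal'' as maximal among \emph{consistent sets of $\Sigma_2$ sentences} rather than as deciding each sentence, since the negation of a $\Sigma_2$ sentence is $\Pi_2$ and hence outside the class under consideration. It is precisely the restriction to parameter-free sentences that makes the Lindenbaum construction available: by the main theorem, any countable model can be top-extended so as to enlarge the universal finite set, so for a suitable numeral $n$ the $\Sigma_2$ instance $\varphi(n)$ is consistent with the $\Sigma_2$ theory of the model yet fails in it. This is what blocks a maximal $\Sigma_2$ theory with natural-number parameters, and the Observation is its positive, parameter-free shadow.
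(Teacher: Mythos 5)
Your proof is correct and takes essentially the same route as the paper: both are Lindenbaum-style constructions of a maximal consistent set of $\Sigma_2$ sentences over \ZFC, followed by an application of compactness/completeness to obtain a model realizing it. The only difference is that you invoke Zorn's lemma where the paper greedily processes an explicit enumeration $\sigma_0,\sigma_1,\ldots$ of the $\Sigma_2$ sentences, adding each one whenever consistency is preserved---an inessential variation.
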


\begin{proof}
Suppose that \ZFC\ is consistent. Enumerate the $\Sigma_2$ sentences $\sigma_0$, $\sigma_1$, $\sigma_2$, and so on. Let us form a certain theory $T$. We start with \ZFC, which we have already assumed is consistent. At stage $n$, we add the sentence $\sigma_n$, if the resulting theory remains consistent. Let $T$ be the theory after all stages are completed. (This theory is just like the usual construction to complete a theory, except that we consider only $\Sigma_2$ sentences and we have a preference for adding the $\Sigma_2$ sentences, rather than their negations.) The theory $T$ is consistent, by compactness, since it is consistent at each stage. If $M$ is a model of $T$, then $M$ is a model of \ZFC\ with a maximal $\Sigma_2$ theory, since if $\sigma$ is a $\Sigma_2$ sentence consistent with the $\Sigma_2$ theory of $M$, then $\sigma$ is $\sigma_n$ for some $n$, and we would have added $\sigma$ to the theory $T$ at stage $n$.
\end{proof}

\begin{theorem}
 For every model of set theory $M\satisfies\ZFC$, there is a $\Sigma_2$ assertion $\sigma(n)$ with some natural-number parameter $n\in\omega^M$, which is not true in $M$ but is consistent with the $\Sigma_2$ diagram of $M$.
\end{theorem}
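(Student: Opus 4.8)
The plan is to let $\varphi$ be the universal $\Sigma_2$ formula supplied by the main theorem and to take $\sigma(n)$ to be simply $\varphi(n)$, the assertion that the natural number $n$ belongs to the universal finite set. First I would fix the finite set $u=\set{x\mid M\satisfies\varphi(x)}$, which is genuinely a finite set of $M$ by statement~(1) of the main theorem. Since $M$ believes $u$ is finite while $\omega^M$ is infinite in $M$, there is some $n\in\omega^M$ with $n\notin u$; fix such an $n$. Then $\sigma(n):=\varphi(n)$ has complexity $\Sigma_2$ and is false in $M$, precisely because $n\notin u$. It remains only to show that $\varphi(n)$ is consistent with the $\Sigma_2$ diagram of $M$.

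The conceptual reason this should succeed is that the only fact blocking $\varphi(n)$ is the inclusion $\set{x\mid\varphi(x)}\of u$, and this is a $\Pi_3$ rather than $\Sigma_2$ assertion: the inclusion $u\of\set{x\mid\varphi(x)}$ is $\Sigma_2$ and so may well appear in the diagram, but the reverse inclusion is not, and hence nothing in the $\Sigma_2$ diagram of $M$ forbids a new element from entering the universal finite set. I would make this precise by compactness, reducing the problem to satisfying an arbitrary finite fragment of the $\Sigma_2$ diagram together with $\varphi(n)$. Such a fragment consists of $\ZFC$ together with finitely many $\Sigma_2$ assertions $\psi_1(\vec a),\dots,\psi_j(\vec a)$, each true in $M$ with parameters $\vec a$, plus the sentence $\varphi(n)$.

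To satisfy such a fragment I would pass to a countable model. Let $M_0\prec M$ be a countable elementary submodel containing $n$ and the parameters $\vec a$, which exists by \Lowenheim--Skolem. By elementarity $M_0\satisfies\ZFC$, $M_0\satisfies\psi_i(\vec a)$ for each $i$, and the universal finite set of $M_0$ is exactly $u_0:=u\cap M_0$, which does not contain $n$. Now the crucial point is that $M_0$ is \emph{countable}, so the main theorem applies to it: taking $z=u_0\cup\set{n}$, a finite set of $M_0$ with $u_0\of z$, statement~(3) produces a top-extension $N\satisfies\ZFC$ of $M_0$ in which $\varphi$ defines exactly $z$, so that $N\satisfies\varphi(n)$. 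Finally, because $N$ is a top-extension of $M_0$, the folklore lemma on locally verifiable properties guarantees that each $\Sigma_2$ truth $\psi_i(\vec a)$ of $M_0$, being witnessed in some $V_\theta^{M_0}=V_\theta^N$, remains true in $N$; hence $N$ models the entire fragment. Compactness then yields a model of the full $\Sigma_2$ diagram of $M$ together with $\varphi(n)$, establishing the desired consistency.

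The main obstacle, and the reason this is not a one-line corollary, is that the top-extension property of the main theorem is stated only for \emph{countable} models, whereas here $M$ is arbitrary. The resolution is to reduce to countable elementary submodels via compactness and then transport the relevant $\Sigma_2$ truths upward through the top-extension using the characterization of $\Sigma_2$ as local verifiability. This is exactly the step requiring care: one must check that the finitely many $\Sigma_2$ parameters survive the descent to $M_0$ (immediate by elementarity) and that the corresponding truths survive the subsequent top-extension (immediate by local verifiability, since a top-extension fixes every $V_\theta$).
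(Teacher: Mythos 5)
Your proposal is correct and follows essentially the same route as the paper: the paper takes $\sigma(n)$ to be the assertion that the universal finite set has at least $n$ elements (for $n\in\omega^M$ above its current size), whereas you take $\sigma(n)$ to be $\varphi(n)$ for some $n\in\omega^M$ not yet in the set, but both are false in $M$ only for non-$\Sigma_2$ reasons and both are shown consistent with the $\Sigma_2$ diagram by exactly your argument---pass to a countable elementary submodel containing the finitely many relevant parameters, apply the top-extension property of the main theorem, observe that $\Sigma_2$ truths persist upward into the top-extension, and conclude by compactness. The paper compresses all of this into the single clause ``it is true in a top extension of any countable elementary substructure of $M$,'' so your write-up simply supplies the details the paper leaves implicit.
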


\begin{proof}
Let $\sigma(n)$ be the assertion that the universal set $\set{a\mid\varphi(a)}$ defined in the main theorem has at least $n$ elements. This is a $\Sigma_2$ assertion, but for some $n$ it is not yet true, although it is consistent with the $\Sigma_2$ diagram of $M$, since it is true in a top extension of any countable elementary substructure of $M$.
\end{proof}

\begin{corollary}\label{Corollary.No-omega-standard-maximal-Sigma_2-theory}
 No $\omega$-standard model of \ZFC\ has a maximal $\Sigma_2$ theory. In particular, no transitive model of \ZFC\ has a maximal $\Sigma_2$ theory.
\end{corollary}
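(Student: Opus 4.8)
The plan is to exploit the fact that a maximal $\Sigma_2$ theory is defined through parameter-free sentences, while the preceding theorem only refutes maximality using a natural-number \emph{parameter}. The key observation is that in an $\omega$-standard model that parameter is standard, so the parametrized assertion collapses to a genuine $\Sigma_2$ sentence. First I would pass to a countable elementary substructure $M_0\prec M$. Since $M$ is $\omega$-standard and $M_0$ has exactly the same theory, $M_0$ is $\omega$-standard as well, shares the $\Sigma_2$ theory of $M$, and (because $M_0\prec M$ preserves the $\Sigma_2$ formula $\varphi$ itself) agrees with $M$ on the universal finite set $\set{a\mid\varphi(a)}$. By the main theorem this set is finite, and $\omega$-standardness makes its size an honest natural number $N$. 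Consequently the assertion $\sigma$ that $\set{a\mid\varphi(a)}$ has at least $N+1$ elements is a bona fide $\Sigma_2$ \emph{sentence}, with no parameters, and $\sigma$ is false in $M$ (and in $M_0$), since that set has exactly $N$ elements there.

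Next I would produce a model witnessing that $\sigma$ is nonetheless consistent with $\ZFC$ together with the $\Sigma_2$ theory of $M$. Let $y$ be the universal finite set of $M_0$ and choose any finite $z\in M_0$ with $y\of z$ having at least $N+1$ elements. By statement~(3) of the main theorem there is a top-extension $N^*$ of $M_0$, itself a model of \ZFC, in which $\varphi$ defines exactly $z$; hence $N^*\satisfies\sigma$. Because $N^*$ top-extends $M_0$, every $\Sigma_2$ truth of $M_0$ persists to $N^*$ (a $\Sigma_2$ fact, once witnessed in some $V_\theta$, survives to all top-extensions), and the $\Sigma_2$ theory of $M_0$ coincides with that of $M$. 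Thus $N^*$ is a model of \ZFC\ together with the full $\Sigma_2$ theory of $M$ and of $\sigma$, so $\sigma$ is consistent with \ZFC\ plus the $\Sigma_2$ theory of $M$. Since $\sigma$ is false in $M$, the $\Sigma_2$ theory of $M$ is not maximal. The final assertion is then immediate, as every transitive model of \ZFC\ is $\omega$-standard.

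The main obstacle I anticipate is conceptual rather than computational: the preceding theorem refutes maximality only with a natural-number parameter, whereas maximality is defined via parameter-free sentences, so the argument must convert the parametrized statement into an honest sentence. This is precisely where $\omega$-standardness is essential, guaranteeing that the relevant parameter $N$ is standard. A secondary technical point is that one must strengthen ``consistent with the $\Sigma_2$ diagram of $M$'' to ``consistent with \ZFC\ plus the $\Sigma_2$ theory of $M$''; this is handled by observing that the witnessing top-extension $N^*$ is itself a model of \ZFC\ and preserves the $\Sigma_2$ theory upward, so no appeal to the bare consistency statement of the theorem is needed.
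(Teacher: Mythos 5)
Your proof is correct and follows essentially the same route as the paper: the paper's one-line argument notes that in an $\omega$-standard model the parameter $n$ (there, the first size not yet attained by the universal finite set) is standard and hence definable, so the parametrized $\Sigma_2$ assertion becomes a genuine sentence, false in $M$ yet consistent with $\ZFC$ plus the $\Sigma_2$ theory of $M$ because it holds in a top-extension of a countable elementary substructure. You have simply unpacked the details (the passage to $M_0\prec M$, the upward persistence of $\Sigma_2$ truths along top-extensions, and the fact that the witnessing extension models $\ZFC$) that the paper leaves implicit.
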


\begin{proof}
If $M$ is $\omega$-standard, then the parameter $n$ in the previous theorem is definable, and so $\sigma(n)$ can be taken as a $\Sigma_2$ sentence, not yet true in $M$ but consistent with the $\Sigma_2$ theory of $M$. So the $\Sigma_2$-theory of $M$ was not maximal.
\end{proof}

One may weaken the hypothesis of the corollary from $M$ being $\omega$-standard to assume only that every natural number of $M$ is $\Sigma_2$-definable in $M$. No such countable model can realize a maximal $\Sigma_2$ theory, since we can always move to a top-extension with a new $\Sigma_2$ assertion becoming true. In the previous arguments, we could have used the assertions, ``stage $n$ is successful,'' rather than the assertions about the size of the universal finite set.

\begin{theorem}
 In any countable model of set theory $M$, every element of $M$ becomes $\Sigma_2$ definable from a natural number parameter in some top-extension of $M$. Indeed, there is a single definition and single parameter $n\in\omega^M$, such that every $a\in M$ is defined by that definition with that parameter in some top-extension $M^+$.
\end{theorem}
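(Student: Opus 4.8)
The plan is to read the universal set of the main theorem in its finite-\emph{sequence} formulation and to take the single parameter to be the length of that sequence as computed inside $M$. The one definition will then single out the entry at that fixed position, which in a suitable top-extension can be arranged to be any prescribed element of $M$. Throughout I use that $M$ is countable, which is what licenses the appeal to the top-extension property of the main theorem.

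First I would fix the $\Sigma_2$ formula $\varphi$ of the main theorem, now understood (as noted following the main question) to enumerate the pairs of a finite \emph{sequence} $s=\<a_0,\ldots,a_{\ell-1}>$ rather than merely a finite set, with the strengthened top-extension property: whenever $t\in M$ is a finite sequence end-extending $s$ (equivalently $s\of t$ as sets of pairs), there is a top-extension $N$ of $M$ in which the universal sequence is exactly $t$. Because \ZFC\ proves the universal set finite, $M$ assigns $s$ a definite length $\ell\in\omega^M$, possibly nonstandard, and I would take $n=\ell$ as the single parameter. The single defining formula $\vartheta(x,n)$ asserts that $x$ occupies position $n$ of the universal sequence, i.e.\ that $\varphi(\<n,x>)$ holds; since membership in the universal sequence is verified in some $V_\theta$, the formula $\vartheta$ has complexity $\Sigma_2$ uniformly in the parameter $n$, and it defines a unique $x$ by functionality of the sequence.

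Now, given any $a\in M$, I would form $t=s\concat\<a>$, which lies in $M$ and end-extends $s$, placing $a$ at position $\ell=n$. By the sequence form of the main theorem there is a top-extension $M^+$ of $M$ in which the universal sequence is precisely $t$; in $M^+$ the unique entry at position $n$ is $a$, so $\set{x\mid\vartheta(x,n)}^{M^+}=\set{a}$. Thus $\vartheta(x,n)$ defines $a$ in $M^+$ with the fixed parameter $n$. As $a$ ranges over all of $M$, this establishes the stronger ``indeed'' clause---a single definition $\vartheta$ and a single parameter $n=\ell$---which in particular yields the first assertion that every element of $M$ becomes $\Sigma_2$ definable from a natural-number parameter in some top-extension.

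The main obstacle is the demand for a \emph{single} parameter serving every $a$ at once: one cannot overwrite an entry already present in $M$, only append beyond the current length, so the prescribed position must be free for every target simultaneously. This is exactly why the parameter is read off from $M$ as the length $\ell$ of the already-defined portion of the sequence---the first position still available no matter which $a$ we wish to capture. The only point needing care is that the sequence reformulation genuinely supports end-extension by a single appended term; but this is just the special case $t=s\concat\<a>$ of the coding argument underlying the main theorem, where one codes the one further value into the \GCH\ pattern immediately past the blocks coding $s$, so no idea beyond the main theorem is required.
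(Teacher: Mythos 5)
Your proof is correct and takes essentially the same approach as the paper: the paper fixes as parameter the first unsuccessful stage $n$ of the universal finite set and defines $a$ as the unique object added at stage $n$ in a top-extension making that stage succeed with exactly $\{a\}$, which is the same ``first free slot'' idea as your position-$\ell$ reading of the sequence formulation. The only cosmetic difference is that you invoke the sequence version of the main theorem (which the paper explicitly sanctions) rather than the stage-indexed set version.
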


\begin{proof}
 Let $n$ be the first unsuccessful stage of the universal finite set. For any object $a\in M$, it follows by the main theorem that there is a top extension $M^+$ of $M$ such that stage $n$ is now successful in $M^+$ and $a$ is the only set added at stage $n$. So $a$ has become $\Sigma_2$-definable in $M^+$ from parameter $n$, and the definition does not depend on $a$.
\end{proof}

\begin{corollary}
 For any countable $\omega$-standard model of set theory $M$, every object $a$ of $M$ becomes $\Sigma_2$ definable without parameters in some top-extension $M^+$ of $M$. Since $M^+$ is also $\omega$-standard, the result can be iterated.
\end{corollary}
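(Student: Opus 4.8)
The plan is to eliminate the natural-number parameter supplied by the previous theorem, using the fact that in an $\omega$-standard model every natural number is standard and hence definable without parameters. Recall that the previous theorem furnishes a single $\Sigma_2$ formula $\psi(x,v)$ (intuitively, ``$x$ is a set added at stage $v$'') and a single parameter $n\in\omega^M$, namely the first unsuccessful stage of the universal finite set, such that for each $a\in M$ there is a top-extension $M^+$ in which $\psi(x,n)$ defines exactly $a$. The sole obstruction to parameter-freeness is the occurrence of $n$, and the whole point of the $\omega$-standard hypothesis is that this obstruction can be removed.

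First I would observe that since $M$ is $\omega$-standard, $\omega^M$ is isomorphic to the genuine natural numbers, so the first unsuccessful stage $n$ is a \emph{standard} natural number. (There are only finitely many successful stages, as $\set{a\mid\varphi(a)}$ is finite, so the first unsuccessful stage exists, and being an element of $\omega^M$ it is standard.) Consequently $n$ is pinned down by a bounded formula: there is a $\Delta_0$ formula $\delta_n(v)$ of set theory asserting ``$v=\bar n$,'' obtained by iterating successor from the empty set finitely many (genuinely finite) times. I would then form the parameter-free formula
\[
  \theta(x)\ :=\ \exists v\,\bigl(\delta_n(v)\wedge\psi(x,v)\bigr).
\]
Because $\delta_n$ is bounded, prepending the existential quantifier $\exists v$ and distributing it past the single $\Sigma_2$ block of $\psi$ does not raise the level, so $\theta$ is again $\Sigma_2$; and since $\delta_n(v)$ defines the standard number $n$ in every model of \ZFC, the formula $\theta(x)$ is equivalent to $\psi(x,n)$ in $M^+$. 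Applying the previous theorem to $a$ therefore yields a top-extension $M^+$ in which $\psi(x,n)$, and hence $\theta(x)$, holds of $a$ alone, so $a$ is $\Sigma_2$-definable without parameters in $M^+$.

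It remains to justify iteration. A top-extension preserves $V_\alpha$ for every $\alpha\in\Ord^M$; in particular $V_{\omega+1}^{M^+}=V_{\omega+1}^M$, whence $\omega^{M^+}=\omega^M$ together with its membership structure. Thus $\omega^{M^+}$ is again standard, $M^+$ is $\omega$-standard, and the corollary applies verbatim to $M^+$; the construction may be repeated as often as desired.

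The main obstacle—indeed the only content beyond the previous theorem—is the parameter-elimination step. One must confirm both that the first unsuccessful stage is genuinely standard, so that $\bar n$ is an honest finite syntactic object and $\delta_n$ an honest finite formula, and that replacing the parameter by this bounded definition leaves the complexity at $\Sigma_2$. Both facts are immediate once $\omega$-standardness is invoked, which is precisely why the hypothesis cannot be weakened to arbitrary countable models (there the best one can hope for is the parametrized statement of the previous theorem).
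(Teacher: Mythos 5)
Your proof is correct and follows essentially the same route as the paper: the paper's own argument is the one-line observation that the parameter $n$ (the first unsuccessful stage) is standard finite in an $\omega$-standard model, hence definable in a way absolute to top-extensions, so it can be eliminated. You have simply spelled out the details of that parameter-elimination (the $\Delta_0$ description of $\bar n$, the preservation of $\Sigma_2$ complexity, and the preservation of $\omega$-standardness under top-extension), all of which are accurate.
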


\begin{proof}
This follows immediately from the preceding theorem, since the number $n$ used in that proof is standard finite and hence definable in a way that is absolute to further top-extensions.
\end{proof}

By iterating the previous corollary, we can form a top-extensional tower of models of \ZFC,
$$M_0\of M_1\of M_2\of\cdots$$
such that every object in any $M_n$ is $\Sigma_2$ definable without parameters in some later $M_k$. The union will not in general be a nice model, for the universal finite set $\set{x\mid \varphi(x)}$ as defined in the union will be internally infinite, with no last successful stage.

\section{Top-extensional set-theoretic potentialism}

Let us now introduce and consider the theory of top-extensional set-theoretic potentialism, concerned with the collection of countable models of \ZFC\ set theory, each accessing precisely its top-extensions. This is a potentialist system in the sense of~\cite{HamkinsLinnebo:Modal-logic-of-set-theoretic-potentialism}, where Hamkins and Linnebo considered several kinds of set-theoretic potentialism, determining in each case the corresponding potentialist validities and investigating the potentialist maximality principles. That investigation had built on earlier work in the modal logic of forcing, another potentialist concept in set theory; Hamkins introduced the forcing modality in~\cite{Hamkins2003:MaximalityPrinciple}; Hamkins and \Lowe\ determined the modal logic of forcing in~\cite{HamkinsLoewe2008:TheModalLogicOfForcing}, with further analysis in~\cite{HamkinsLoewe2013:MovingUpAndDownInTheGenericMultiverse} and~\cite{HamkinsLeibmanLoewe2015:StructuralConnectionsForcingClassAndItsModalLogic}.

Here, we extend the analysis to the case of top-extensional set-theoretic potentialism, a case not treated in~\cite{HamkinsLinnebo:Modal-logic-of-set-theoretic-potentialism}. Our analysis amounts to a set-theoretic analogue of~\cite{Hamkins:The-modal-logic-of-arithmetic-potentialism}, where Hamkins considered the models of \PA\ under top-extensions, analyzing the theory of top-extensional arithmetic potentialism.

\begin{figure}[h]
\begin{tikzpicture}[scale=.5,yscale=.9]
\node (0) at (0,0) {};
\node (a) at (-1,4) {};
\node (b) at (1,4) {};
\node (c) at (-4,7) {};
\node (d) at (-1,7) {};
\node (e) at (0,6) {};
\node (f) at (2.5,6) {};
\node (g) at (2,8.5) {};
\node (h) at (5,8.5) {};
\node (i) at (-2,9) {};
\node (j) at (1.5,9) {};
 \draw[fill=blue,fill opacity=.05,thin] (0.center) -- (a.center) -- node[below,opacity=1] {$M$} (b.center) -- cycle;
 \draw[fill=blue,fill opacity=.08] (0.center) -- (a.center) to[out=104,in=-60] (c.center) -- node[below,opacity=1,scale=.7] {$N_0$} (d.center) to[out=-85,in=76] (b.center) -- cycle;
 \draw[fill=red,fill opacity=.08,thin] (0.center) -- (a.center) to[out=104,in=-135] (e.center) -- node[below,opacity=1,scale=.7] {$N_1$} (f.center) to[out=-135,in=76] (b.center) -- cycle;
 \draw[fill=red,fill opacity=.08] (0.center) -- (a.center) to[out=104,in=-135] (e.center) to[out=45,in=-84] (g.center) -- node[below,opacity=1,scale=.7] {$N_{11}$} (h.center) to[out=-125,in=45] (f.center) to[out=-135,in=76] (b.center) -- cycle;
 \draw[fill=yellow,fill opacity=.08] (0.center) -- (a.center) to[out=104,in=-135] (e.center) to[out=45,in=-75] (i.center) -- node[below,opacity=1,scale=.7] {$N_{10}$} (j.center) to[out=-104,in=45] (f.center) to[out=-135,in=76] (b.center) -- cycle;
\end{tikzpicture}
\caption{A model of set theory $M$ with several top-extensions; they form a tree}
\end{figure}
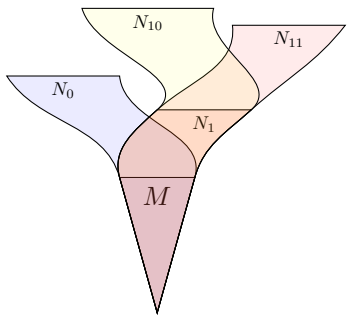

As a Kripke model, the set-theoretic top-extensional system provides natural interpretations of the modal operators. Namely, $\possible\varphi$ is true at a countable model of set theory $M$ if there is a top-extension of $M$ to a model $N$ in which $\varphi$ is true, and $\necessary\varphi$ is true at $M$ if all top-extensions $N$ of $M$ satisfy $\varphi$. One aims to study the modal validities of this system.

Specifically, an assertion $\varphi(p_0,\ldots,p_n)$ of propositional modal logic, expressed with propositional variables, Boolean connectives and modal operators, is \emph{valid} at a world with respect to a language of assertions, if $\varphi(\psi_0,\ldots,\psi_j)$ is true at that world for any assertions $\psi_i$ in that substitution language. In general, the question of whether a given modal assertion $\varphi$ is valid depends on the language of allowed substitution instances, such as whether parameters from $M$ are allowed or not. Following~\cite{HamkinsLinnebo:Modal-logic-of-set-theoretic-potentialism}, let us denote by $\Val(M,\mathcal{L}_\in(A))$ the set of propositional modal assertions $\varphi(p_0,\ldots,p_j)$ that are valid in $M$ with respect to assertions in the language of set theory allowing parameters from $A$. Allowing a larger substitution language or larger set of parameters is, of course, a more stringent requirement for validity, because of the additional substitution instances. So $\Val(M,\mathcal{L}_\in(A))$ is inversely monotone in $A$.

\begin{theorem}\label{Theorem.Validities-of-model-with-parameters-are-S4}
 Consider the potentialist system consisting of the countable models of \ZFC, each accessing its top extensions. For any model of set theory $M$, the modal assertions that are valid in $M$ for assertions in the language of set theory allowing parameters from $M$ or even just natural number parameters (and a single particular natural number $n\in\N^M$ suffices), are exactly the assertions of the modal theory \theoryf{S4}. Meanwhile, the validities with respect to sentences are contained within \theoryf{S5}. Succinctly,
\begin{equation}\notag\nonumber
\begin{split}
   S4 \ = \ \Val\left(M,\mathcal{L}_\in(M)\right) \
        &= \ \Val\left(M,\mathcal{L}_\in(\N^M)\right) \\
        &= \ \Val\left(M,\mathcal{L}_\in(\{n\})\right) \
        \of \ \Val\left(M,\mathcal{L}_\in\right) \ \of \ \theoryf{S5}.\\
\end{split}
\end{equation}
\end{theorem}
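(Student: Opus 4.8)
The plan is to prove the displayed chain by establishing a single lower bound, a single upper bound, and routing everything else through the inverse monotonicity of $\Val$ in the parameter set. The lower bound $\theoryf{S4}\of\Val(M,\mathcal{L}_\in(M))$ is the soft direction: the accessibility relation of the system---being top-extended---is reflexive (every model top-extends itself vacuously) and transitive (a top-extension of a top-extension is a top-extension), so the underlying Kripke frame is a preorder, and every theorem of $\theoryf{S4}$ is valid on every preorder under every substitution whatsoever. This already yields $\theoryf{S4}\of\Val(M,\mathcal{L}_\in(A))$ for each parameter set $A$. Now, since $\Val$ is inversely monotone in $A$ and $\{n\}\of\N^M\of M$, we get $\Val(M,\mathcal{L}_\in(M))\of\Val(M,\mathcal{L}_\in(\N^M))\of\Val(M,\mathcal{L}_\in(\{n\}))$, so the three middle terms are squeezed between $\theoryf{S4}$ (from below) and $\Val(M,\mathcal{L}_\in(\{n\}))$ (the largest). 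It therefore suffices to prove the \emph{single} upper bound $\Val(M,\mathcal{L}_\in(\{n\}))\of\theoryf{S4}$ for a suitable $n$, whereupon all three collapse to $\theoryf{S4}$; and since $\emptyset\of\{n\}$, that same equality gives $\theoryf{S4}\of\Val(M,\mathcal{L}_\in)$, which is the remaining ``$\of$''.

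The heart of the matter is this upper bound, and the engine is the universal finite set of the main theorem. I take $n$ to be the first unsuccessful stage of $\varphi$ in $M$. Given a modal assertion $\chi\notin\theoryf{S4}$, I invoke the completeness of $\theoryf{S4}$ with respect to finite pre-trees to fix a finite pre-tree $\mathcal{T}$ with root $r$, a valuation of the propositional variables, and a world---namely $r$---at which $\chi$ fails. The goal is to define a map $\rho$ from the top-extensions of $M$ onto $\mathcal{T}$ that is a p-morphism of frames, together with substitution instances $\psi_i$---assertions about the universal finite set, using only the parameter $n$---such that $M'\satisfies\psi_i$ exactly when $\rho(M')$ satisfies $p_i$. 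Concretely, $\rho(M')$ reads the current value of the universal finite set in $M'$ as a code for a position in $\mathcal{T}$. Because that value grows monotonically along top-extensions, $\rho$ can be arranged to move only upward in $\mathcal{T}$ (the forth condition); and because the main theorem lets me top-extend any countable model so that the universal finite set becomes any prescribed larger finite value, from a world at a node $w$ I can reach a world at any $\mathcal{T}$-successor of $w$ (the back condition). Pulling the valuation back through $\rho$ then gives $M\not\satisfies\chi[\vec\psi]$, so $\chi\notin\Val(M,\mathcal{L}_\in(\{n\}))$, as desired.

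I expect the main obstacle to be two intertwined points in this realization. First, $\rho$ must be a total, $\of$-monotone reading that behaves correctly on \emph{every} top-extension of $M$, including ones in which the universal finite set acquires elements unrelated to my intended coding; the definition of $\rho$ and of the codes must be robust enough that such junk never violates the forth or back conditions. Second, and more essentially, a p-morphism onto a merely finite \emph{poset} would refute only the non-theorems of Grzegorczyk's logic $\theoryf{Grz}$, which properly extends $\theoryf{S4}$, so to refute \emph{every} $\chi\notin\theoryf{S4}$ I must genuinely realize the nontrivial \emph{clusters} of the pre-tree $\mathcal{T}$. Since the top-extension relation is antisymmetric, no cluster can be realized by literal mutual accessibility; instead I realize each cluster by an infinite ascending chain of top-extensions along which the universal finite set keeps growing, so that $\rho$ cycles cofinally through the cluster's nodes. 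A p-morphic image of an antisymmetric frame need not be antisymmetric, and these infinite chains---available because every countable model again has top-extensions pushing the universal finite set still further---are precisely what holds the validities down at $\theoryf{S4}$ rather than at $\theoryf{Grz}$.

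Finally, the containment $\Val(M,\mathcal{L}_\in)\of\theoryf{S5}$ for the sentence language is the one inclusion not driven by the universal finite set. It follows from the general theory of set-theoretic potentialism developed in~\cite{HamkinsLinnebo:Modal-logic-of-set-theoretic-potentialism}, namely that the sentence-validities of such a system always lie within $\theoryf{S5}$. I regard this as the routine part; all the genuine work lies in the pre-tree realization of the preceding paragraph, and in particular in the robust, cluster-realizing construction of the p-morphism $\rho$.
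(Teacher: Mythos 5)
Your main argument is essentially the paper's: the lower bound is the soft frame-theoretic observation that \theoryf{S4} holds in any potentialist system, the monotonicity of $\Val$ in the parameter set reduces everything to the single upper bound $\Val(M,\mathcal{L}_\in(\{n\}))\of\theoryf{S4}$ with $n$ the first unsuccessful stage, and that bound is obtained by realizing an arbitrary finite pre-tree countermodel inside the top-extensions of $M$ by reading the growth of the universal finite set from stage $n$ onward as instructions for climbing the pre-tree. Your p-morphism $\rho$ is what the paper calls a (railyard) labeling, and your two anticipated obstacles are exactly the points the paper's coding is designed to handle: totality and robustness are arranged by interpreting \emph{any} finite sequence of numbers added at stages $\geq n$ as a node (numbers below $k$ direct the branching, the last number $\geq k$ taken mod $m$ selects the node within the final cluster), and the clusters are realized, as you say, because one can always top-extend again to append further numbers and so revisit every node of a cluster cofinally. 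So for the displayed equalities your proposal matches the paper.

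The one genuine gap is the final containment $\Val(M,\mathcal{L}_\in)\of\theoryf{S5}$, which you dismiss as following from ``the general theory'' of set-theoretic potentialism. It does not: there is no theorem that the sentential validities of an arbitrary potentialist system lie within \theoryf{S5} (a modally trivial world, all of whose sentences have frozen truth values under the accessibility relation, validates strictly more than \theoryf{S5}, e.g.\ $\varphi\iff\necessary\varphi$). The upper bound of \theoryf{S5} must be earned by exhibiting arbitrarily large families of independent switches or a dial, and this is delicate here precisely because $\Sigma_2$ assertions can only be switched on, never off, under top-extension. The paper gets its dial from the universal finite set itself: the sentences $d_k$ asserting that the last successful stage of the process admitted a set of size exactly $k$ are mutually exclusive, and each is possible over any countable model by the main theorem, which yields finite dials of every size and hence the \theoryf{S5} bound via the results of~\cite{HamkinsLeibmanLoewe2015:StructuralConnectionsForcingClassAndItsModalLogic}. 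So, contrary to your closing remark, this inclusion is also driven by the universal finite set, and your proof needs that dial construction (or some substitute family of switches) to be complete.
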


The inclusions stated at the end are optimal. Specifically, the particular parameter $n$ that suffices is the first unsuccessful stage of the universal finite set in $M$. When this is a standard number, then it is absolutely definable and hence not needed as a parameter, and in such a case the sentential validities are $\Val(M,\mathcal{L}_\in)=\theoryf{S4}$, realizing the lower bound. Meanwhile, theorem~\ref{Theorem.Maximality-principle} shows that some models have sentential validities $\Val(M,\mathcal{L}_\in)=\theoryf{S5}$, realizing the upper bound.

\begin{proof}
It is easy to see that every assertion of \theoryf{S4} is valid, regardless of the language for the substitution instances, for indeed, \theoryf{S4} is valid in any potentialist system. The content of the theorem is that, in contrast to many of the potentialist systems analyzed in~\cite{HamkinsLinnebo:Modal-logic-of-set-theoretic-potentialism}, there are no additional validities beyond \theoryf{S4} for this potentialist system, using the language of set theory with parameters.

For this, we follow the main technique of~\cite{Hamkins:The-modal-logic-of-arithmetic-potentialism}, where it was proved that the potentialist validities of the models of arithmetic under top-extension are also exactly \theoryf{S4}. Suppose that $\varphi(p_0,\ldots,p_j)$ is a propositional modal assertion not in \theoryf{S4}. Let $M$ be any countable model of \ZFC, and let $n$ be the first unsuccessful stage in the definition of the universal finite set in $M$. We will provide a substitution instance $\varphi(\psi_0(n),\ldots,\psi_j(n))$ of the formula $\varphi$ that is false at $M$ in the top-extensional potentialist semantics, using parameter $n$.

Since the collection of finite pre-trees is a complete set of frames for \theoryf{S4}, there is a Kripke model $K$ of propositional worlds, whose underlying frame is a finite pre-tree $T$, where $\varphi(p_0,\ldots,p_j)$ fails at an initial world.
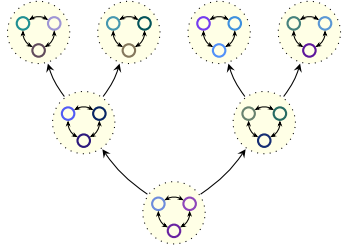
\begin{figure}[h]
\newcommand{\pentacluster}{\foreach\t in {0,...,4} {\draw (18+72*\t:1) node[circle,draw] (\t) {};}
    \foreach \r/\s in {0/1,1/2,2/3,3/4,4/0} {\draw[<->,>=stealth] (\r) edge[bend right=20] (\s);}
    \foreach \r/\s in {0/2,2/4,4/1,1/3,3/0} {\draw[<->,>=stealth] (\r) edge[bend right=10] (\s);}
    }
\newcommand{\tricluster}{\foreach\t in {0,1,2} {
  \setrandomcolor
    \draw (30+120*\t:.8) node[circle,thick,draw,scale=.5,randomcolor!65!blue] (\t) {};}
    \foreach \r/\s in {0/1,1/2,2/0} {\draw[{<[scale=.5]}-{>[scale=.5]},>=Stealth] (\r) edge[bend right=30] (\s);}
    }
\begin{tikzpicture}[scale=.3]
\begin{scope}
  \setrandomcolor
    \draw node[circle,dotted,draw,scale=2.5,fill=yellow,fill opacity=.1] (root) {};
    \tricluster
\end{scope}
\begin{scope}[shift={(-4,4)}]
  \setrandomcolor
    \draw node[circle,dotted,draw,scale=2.5,fill=yellow,fill opacity=.1] (left) {};
    \tricluster\end{scope}
\begin{scope}[shift={(4,4)}]
  \setrandomcolor
    \draw node[circle,dotted,draw,scale=2.5,fill=yellow,fill opacity=.1] (right) {};
    \tricluster\end{scope}
\begin{scope}[shift={(-6,8)}]
  \setrandomcolor
    \draw node[circle,dotted,draw,scale=2.5,fill=yellow,fill opacity=.1] (leftleft) {};
    \tricluster\end{scope}
\begin{scope}[shift={(-2,8)}]
  \setrandomcolor
    \draw node[circle,dotted,draw,scale=2.5,fill=yellow,fill opacity=.1] (leftright) {};
    \tricluster\end{scope}
\begin{scope}[shift={(2,8)}]
  \setrandomcolor
    \draw node[circle,dotted,draw,scale=2.5,fill=yellow,fill opacity=.1] (rightleft) {};
    \tricluster\end{scope}
\begin{scope}[shift={(6,8)}]
  \setrandomcolor
    \draw node[circle,dotted,draw,scale=2.5,fill=yellow,fill opacity=.1] (rightright) {};
    \tricluster\end{scope}
\draw[->,>=stealth] (root) edge[bend left=10] (left)
 (left) edge[bend left=10] (leftleft)
 (left) edge[bend right=10] (leftright)
 (root) edge[bend right=10] (right)
 (right) edge[bend left=10] (rightleft)
 (right) edge[bend right=10] (rightright);
\end{tikzpicture}
\caption{A pre-tree of possible worlds}\label{Figure.Pretree}
\end{figure}
Let $k$ be such that this tree is at most $k$-branching, and let $m$ be such that all the clusters have size at most $m$.

Consider the universal finite set as defined in $M$ and its top-extensions. This set is the result of a finite sequence of successful stages, each stage adding finitely many additional sets to the universal finite set. Thus, the process produces a definable finite sequence of natural numbers. From that list, but starting only from stage $n$, we may extract the subsequence consisting of the numbers less than $k$, which we may interpret as describing a particular way of successively climbing up the clusters of $T$, directing us to choose a particular branching cluster at each step, plus the last number on the list (from stage $n$ onward) that is $k$ or larger (or $0$ if there is none), which we may interpret as picking a particular node in that cluster by considering it modulo $m$. In this way, we associate with each node $t$ of the pre-tree $T$ a statement $\Phi_t$ that describes something about the nature of the sequence enumerated by the universal finite set process, in such a way that any world $N$ satisfying $\Phi_t$ will satisfy $\possible\Phi_r$ just in case $t\leq r$ in $T$. The reason is that the main theorem shows that the universal sequence as just defined can be extended in any desired finite way in a top extension. Thus, we have provided what is called a labeling for the finite pre-tree in~\cite{HamkinsLeibmanLoewe2015:StructuralConnectionsForcingClassAndItsModalLogic}; this is a \emph{railyard} labeling in the sense of~\cite{Hamkins:The-modal-logic-of-arithmetic-potentialism}. It follows from~\cite[lemma~9]{HamkinsLeibmanLoewe2015:StructuralConnectionsForcingClassAndItsModalLogic} (see also~\cite[theorem~28]{Hamkins:The-modal-logic-of-arithmetic-potentialism}) that there are sentences $\psi_0(n)$, $\psi_1(n)$,\ldots,$\psi_j(n)$, that track the truth of the propositional variables in the worlds of the propositional Kripke model $K$, using the parameter $n$, so that $N\satisfies\varphi(\psi_0(n),\ldots,\psi_j(n))$ just in case $(K,t)\satisfies\varphi(p_0,\ldots,p_j)$, where $t$ is the unique world of $K$ with $N\satisfies\Phi_t$. We may assume that having nothing at stage $n$ or beyond corresponds to the initial world of $K$, where $\varphi(p_0,\ldots,p_j)$ fails. And since the model of set theory $M$ has nothing at stage $n$ or beyond, it follows that $M\satisfies\neg\varphi(\psi_0(n),\ldots,\psi_j(n))$, and we have therefore found the desired substitution instance showing that $\varphi$ is not valid in $M$ for assertions with parameter $n$.

Lastly, when parameters are not allowed, it remains to show that the sentential validities are contained in \theoryf{S5}. For this, it suffices by the main results of~\cite{HamkinsLeibmanLoewe2015:StructuralConnectionsForcingClassAndItsModalLogic} to show that the model supports arbitrarily large families of independent switches, or alternatively a dial (a \emph{dial} in a Kripke model is a sequence of statements, such that every world satisfies exactly one of them and the possibility of all the others). For any standard finite number $k$, let $d_k$ be the sentence asserting that the last successful stage of process $C$ in the main theorem admits a set of size $k$ to the universal finite set. These sentences can be used to form a dial of any desired finite size, since we can always top-extend so as to make any one of them true. So the modal validities of any model of set theory $M$ in the system will be contained within \theoryf{S5}.
\end{proof}

\begin{corollary}
 If there is a transitive model of \ZFC, or indeed merely a model of \ZFC\ in which the universal finite set is empty or has standard finite size, then the modal assertions that are valid in every countable model of set theory, with respect to sentences of the language of set theory, are exactly the assertions of \theoryf{S4}.
\end{corollary}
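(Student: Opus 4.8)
The plan is to squeeze the target set of validities between \theoryf{S4} and \theoryf{S4}, reading off the two ends from the preceding theorem. Write $\Val^\ast=\bigcap_M\Val(M,\mathcal{L}_\in)$ for the modal assertions valid with respect to sentences in every countable model $M$ of \ZFC. The lower bound needs no hypothesis: since the top-extension accessibility relation is reflexive and transitive, \theoryf{S4} is sound in the Kripke model, so $\theoryf{S4}\of\Val(M,\mathcal{L}_\in)$ for every $M$, whence $\theoryf{S4}\of\Val^\ast$. The whole content is therefore the reverse inclusion, and for that it suffices to exhibit a \emph{single} countable model $M_0$ with $\Val(M_0,\mathcal{L}_\in)=\theoryf{S4}$; then $\Val^\ast\of\Val(M_0,\mathcal{L}_\in)=\theoryf{S4}$ and we are done.

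First I would extract such an $M_0$ from the hypothesis. The cleanest case is a countable transitive model: if there is a transitive model of \ZFC, then by \Lowenheim--Skolem and the Mostowski collapse there is a countable transitive one $M_0$. Being well-founded, $M_0$ is $\omega$-standard, and we have already observed that in any transitive model neither process $A$ nor process $B$ has a successful stage; hence the universal finite set is empty and its first unsuccessful stage is stage $0$, a standard number. Under the weaker hypotheses one passes to a countable model (a countable elementary substructure suffices) in which the universal finite set is empty or of standard finite size; the operative point in each case is that the first unsuccessful stage $n$ is a standard natural number, which is exactly the feature the proof of theorem~\ref{Theorem.Validities-of-model-with-parameters-are-S4} requires, since only standardly many stages can contribute to a set of standard finite size.

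The crux is then to rerun the argument of theorem~\ref{Theorem.Validities-of-model-with-parameters-are-S4} at $M_0$ with the parameter eliminated. There, for a given $\varphi\notin\theoryf{S4}$ one built a railyard labeling of a finite pre-tree and obtained sentences $\psi_i(n)$, using the first unsuccessful stage $n$ as a parameter, with $M_0\satisfies\neg\varphi(\psi_0(n),\ldots,\psi_j(n))$. When $n$ is standard, however, it is a specific numeral, hence definable by a parameter-free formula that is absolute to the relevant top-extensions, so the phrase ``from stage $n$ on'' can be spelled out without reference to $n$ as a parameter. Inlining this definition converts each $\psi_i(n)$ into a genuine sentence $\psi_i$ with the same meaning in $M_0$ and its top-extensions, yielding $M_0\satisfies\neg\varphi(\psi_0,\ldots,\psi_j)$ and so $\varphi\notin\Val(M_0,\mathcal{L}_\in)$. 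As $\varphi\notin\theoryf{S4}$ was arbitrary, $\Val(M_0,\mathcal{L}_\in)\of\theoryf{S4}$, and with the automatic reverse inclusion this gives $\Val(M_0,\mathcal{L}_\in)=\theoryf{S4}$, completing the squeeze.

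I expect the main obstacle to be precisely this parameter-elimination step: verifying that the railyard sentences depend on $n$ only through the definable notion ``stage $n$ onward,'' so that standardness of $n$ genuinely removes the parameter while preserving the labeling property $N\satisfies\Phi_t\Rightarrow(N\satisfies\possible\Phi_r\iff t\leq r)$ uniformly across the top-extensions of $M_0$. Everything else is bookkeeping: the soundness of \theoryf{S4} is routine, and the passage from the hypothesis to a countable $M_0$ with standard first unsuccessful stage is immediate in the transitive case.
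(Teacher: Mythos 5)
Your proposal is correct and follows the paper's own argument: the lower bound is the automatic soundness of \theoryf{S4}, and the upper bound comes from a single countable model in which the first unsuccessful stage $n$ is standard, hence absolutely definable and eliminable as a parameter in theorem~\ref{Theorem.Validities-of-model-with-parameters-are-S4}. The only difference is that you spell out the reduction to a countable model and the parameter-elimination step in more detail than the paper does, which is harmless.
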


\begin{proof}
 Suppose that $M$ is a model of \ZFC\ set theory in which the universal finite set has standard finite size. So the first unsuccessful stage $n$ in $M$ is a standard finite number. Thus, we do not need it as a parameter in theorem~\ref{Theorem.Validities-of-model-with-parameters-are-S4}, since it is definable in a way that is absolute to top-extensions. So the top-extensional validities of $M$, with respect to sentences in the language of set theory, are exactly \theoryf{S4}.
\end{proof}

Meanwhile, there are other models, whose sentential validities strictly exceed \theoryf{S4}, showing that the validities can depend on whether one allows parameters or not. In order to prove this, let us first establish the following characterization of top-extensional possibility. This lemma is the set-theoretic analogue of the corresponding analysis of~\cite{Hamkins:The-modal-logic-of-arithmetic-potentialism} for models of arithmetic.

\goodbreak
\begin{lemma}[Possibility-characterization lemma]\label{Lemma.Possibility-characterization}
 In the top-extensional potentialist system consisting of the countable models of $\ZFC$ under top-extensions, the following are equivalent for any countable $\omega$-nonstandard model of set theory $M$ and any assertion $\varphi(a)$ in the language of set theory with parameter $a\in M$.
 \begin{enumerate}
   \item $M\satisfies\possible\varphi(a)$. That is, $\varphi(a)$ is true in some top-extension of $M$.
   \item For every standard number $k$ and every ordinal $\beta$ in $M$, in the collapse extension $M[G]$ where $V_\beta^M$ is made countable, there is a top-extension of $\langle V_\beta^M,\in\rangle$ to a model $\<N,\in^N>$ satisfying $\ZFC_k$ and $\varphi(a)$.
   \item There is some nonstandard $k$, such that for every ordinal $\beta$ in $M$ there is in the collapse extension $M[G]$ a top-extension of $\langle V_\beta^M,\in\rangle$ to a model $\<N,\in>$ satisfying $\ZFC_k$ and $\varphi(a)$.
 \end{enumerate}
\end{lemma}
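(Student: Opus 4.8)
The plan is to route all three conditions through a single consistency assertion, and then to connect that assertion to genuine top-extensions by a compactness construction in one direction and a reflection argument in the other. For an ordinal $\beta\in M$ and a number $k\in\N^M$, let $\Theta_{\beta,k}$ be the first-order theory, in the language of set theory augmented by a constant for each element of $V_\beta^M$, consisting of the atomic diagram of $\langle V_\beta^M,\in\rangle$, the \emph{top-extension axioms} asserting that no set of rank below $\beta$ is new, together with $\ZFC_k$ and $\varphi(a)$. A model of $\Theta_{\beta,k}$ is precisely a top-extension of $\langle V_\beta^M,\in\rangle$ to a model of $\ZFC_k+\varphi(a)$. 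Since $V_\beta^M$ is countable in the collapse extension $M[G]$, the completeness theorem (applied inside $M[G]$, and hence inside $M$, as the collapse forcing leaves $\Con(\Theta_{\beta,k})$ unchanged, proofs being finite objects over the unchanged $V_\beta^M$) shows that conditions (2) and (3) are equivalent, respectively, to $M\satisfies\Con(\Theta_{\beta,k})$ holding for every standard $k$ and every $\beta$, and to its holding for some fixed nonstandard $k$ and every $\beta$. I will prove the cycle $(1)\Rightarrow(2)\Rightarrow(3)\Rightarrow(1)$; the implication $(3)\Rightarrow(2)$ is immediate, since a model of $\ZFC_k$ for nonstandard $k$ satisfies $\ZFC_j$ for every standard $j\le k$.

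For $(3)\Rightarrow(1)$ I would build an actual top-extension of $M$ by compactness. Let $T$ comprise the full atomic diagram of $M$, the top-extension axioms for every ordinal $c\in\Ord^M$, all of \ZFC, and $\varphi(a)$; any model of $T$ is a top-extension of $M$ satisfying $\ZFC+\varphi(a)$, and by the downward \Lowenheim--Skolem theorem it may be taken countable, witnessing $M\satisfies\possible\varphi(a)$. To see $T$ is consistent, observe that a finite fragment mentions only parameters of rank below some $\beta\in\Ord^M$ and only finitely many \ZFC\ axioms, all inside some $\ZFC_j$ with $j$ standard. The model $N_\beta$ supplied by (3) is a genuine countable structure in $V$ (as $M$ and the generic are real), and it satisfies this fragment: it realizes the relevant atomic-diagram and top-extension axioms because it rank-extends $V_\beta^M$, and it satisfies $\ZFC_j$ and $\varphi(a)$ because $\ZFC_j\of\ZFC_k$ for the fixed nonstandard $k$. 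Compactness then yields the desired $N$.

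The crux is $(1)\Rightarrow(2)$, where the difficulty is that a genuine external top-extension must be converted into an internal consistency assertion of the $\omega$-nonstandard model $M$, for which the naive $\Pi^0_1$-absoluteness of consistency is unavailable. Fix a countable top-extension $N\supseteq M$ with $N\satisfies\ZFC+\varphi(a)$, and suppose toward a contradiction that $M\satisfies\neg\Con(\Theta_{\beta,k})$ for some standard $k$ and some $\beta$. The witnessing proof of $\bot$ is an object of $M$, and since $N$ rank-extends $M$ and the predicate ``$p$ is a proof of $\bot$ from $\Theta_{\beta,k}$'' is absolute (being arithmetic in $p$ and in the code of $\Theta_{\beta,k}$, the latter built from $V_\beta^N=V_\beta^M$), we get $N\satisfies\neg\Con(\Theta_{\beta,k})$. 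On the other hand, inside $N$ the reflection theorem applied to $\ZFC_k$ and the formula $\varphi$ produces an ordinal $\theta>\beta$ with $V_\theta^N\satisfies\ZFC_k+\varphi(a)$; as $\theta>\beta$, this $V_\theta^N$ rank-extends $V_\beta^N=V_\beta^M$ and is therefore an internal set-model of $\Theta_{\beta,k}$, so $N\satisfies\Con(\Theta_{\beta,k})$. This contradiction establishes (2). The essential point, and the reason the argument is not circular, is that $N$ both inherits the putative inconsistency from $M$ and, being a full model of \ZFC, can internally reflect $\varphi(a)$ into a set; these cannot coexist.

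Finally, for $(2)\Rightarrow(3)$ I would argue by overspill inside $M$. Reasoning in $M$, the property $\Con(\Theta_{\beta,k})$ is downward closed in $k$ (a subtheory of a consistent theory is consistent) and anti-monotone in $\beta$: if $\beta\le\beta'$ then any model of $\Theta_{\beta',k}$ also models $\Theta_{\beta,k}$, since a rank-extension of $V_{\beta'}^M$ is a rank-extension of $V_\beta^M$. Hence the function $c(\beta)$ giving the least $k$ with $\neg\Con(\Theta_{\beta,k})$, set to $\infty$ if there is none, is non-increasing on $\Ord^M$, and by (2) its value is always nonstandard or $\infty$. As $\N^M$ is well-ordered in $M$, $c$ attains a minimum value $k^\ast$, again nonstandard or $\infty$; choosing any nonstandard $j<k^\ast$ (here the $\omega$-nonstandardness of $M$ is essential) gives $M\satisfies\Con(\Theta_{\beta,j})$ for every $\beta$, which is exactly condition (3). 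I expect this overspill step and the reflection step of $(1)\Rightarrow(2)$ to demand the most care, the latter being the genuine heart of the lemma.
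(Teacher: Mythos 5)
Your reduction of statements (2) and (3) to the consistency of a first-order theory $\Theta_{\beta,k}$ is where the argument breaks. The ``top-extension axioms'' --- that no set of rank below $\beta$ is new --- are not first-order expressible over the atomic diagram: to say that the named constants exhaust $c_{V_\alpha}$ requires an infinite disjunction (equivalently, omitting the type $\{x\in c_{V_\alpha}\}\cup\{x\neq c_y : y\in V_\alpha^M\}$), and by compactness \emph{every} consistent first-order theory containing the atomic diagram of an infinite structure has models in which $c_{V_\alpha}$ acquires unnamed elements. So models of $\Theta_{\beta,k}$ are not ``precisely the top-extensions,'' and the existence of a top-extension of the countable structure $\langle V_\beta^M,\in\rangle$ is a genuinely $\Sigma^1_1$ assertion, not the negation of a consistency statement --- this is exactly why the paper's Process B must track $\Pi^1_1$ tree \emph{ranks} and why the paper's proof of this lemma invokes Shoenfield absoluteness rather than absoluteness of $\Con$. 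The damage is worst in your $(3)\Rightarrow(1)$ step: a model of your theory $T$ produced by compactness need not be a top-extension of $M$ at all. The failure is essential, not cosmetic: your argument nowhere uses that $M$ is countable, yet $(3)\Rightarrow(1)$ is false for uncountable models (take an uncountable elementary extension $M'$ of a countable $M$ satisfying (3); statement (3) transfers to $M'$ by elementarity, but $M'$ may be rather classless and by the Keisler--Silver and Enayat results of section 2 have no top-extension to a model of \ZFC{} whatsoever). The paper instead proves $(3)\Rightarrow(1)$ via the Keisler--Morley theorem: take a countable elementary top-extension $M^+$ of $M$ (built by definable ultrapower --- this is where countability enters), transfer (3) to $M^+$, apply it to a $\beta$ above all ordinals of $M$, and use the nonstandardness of $k$ to see that the resulting internal model $N$ satisfies full \ZFC{} externally.

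Your $(1)\Rightarrow(2)$ has the right shape --- push a putative obstruction up into the top-extension $N$ and contradict reflection there --- but as written it only establishes $\Con(\Theta_{\beta,k})$, which by the above is strictly weaker than (2). To repair it you must transfer the $\Pi^1_1$ assertion ``there is no top-extension of $\langle V_\beta^M,\in\rangle$ to a model of $\ZFC_k+\varphi(a)$'' from $M[G]$ up to $N[G]$ (or, as the paper does, transfer the $\Sigma^1_1$ witness downward from $N[G]$ to $M[G]$ by Shoenfield absoluteness, using that the two collapse extensions have the same reals and countable ordinals); absoluteness of proofs of $\bot$ does not suffice. Your $(2)\Rightarrow(3)$ overspill is sound in substance and matches the paper's one-word proof, but it should be applied directly to the $M$-definable property ``for every $\beta$, the collapse extension has a top-extension of $\langle V_\beta^M,\in\rangle$ to a model of $\ZFC_k+\varphi(a)$,'' not to $\Con(\Theta_{\beta,k})$.
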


\begin{proof}
($2\to 3$) Overspill.

($3\to 1$) Fix the nonstandard number $k$ as in statement $3$. Let $M^+$ be a countable elementary top-extension of $M$, and let $\beta$ be some new ordinal of $M^+$ larger than the ordinals of $M$. In the collapse extension $M^+[G]$ in which $V_\beta^N$ is countable, there is a top-extension of $\langle V_\beta^{M^+},\in\rangle$ to a model $\<N,\in^N>$ in which $\ZFC_k$ and $\varphi(a)$ hold. By the choice of $\beta$ above $M$, and since $k$ is nonstandard, it follows that $N$ is a top-extension of $M$ to a model of \ZFC\ in which $\varphi(a)$, so $M\satisfies\possible\varphi(a)$, as desired.

($1\to 2$) Suppose that $M$ has a top-extension to a model $N$ of $\ZFC$ in which $\varphi(a)$ holds. For any standard finite $k$, by the reflection theorem in $N$, there are arbitrarily large ordinals $\delta$ for which $V_\delta^N\satisfies\ZFC_k\wedge\varphi(a)$. So for every ordinal $\beta$ in $M$, there is in $N$ a top-extension of $\langle V_\beta^M,\in\rangle$ to a model of $\ZFC_k$ plus $\varphi(a)$. Let $G$ be $N$-generic for the collapse forcing to make $V_\beta^M$ countable. So in $N[G]$ the $\Sigma^1_1$ statement asserting that there is such a model extending $\langle V_\beta^M,\in\rangle$ is true. By Shoenfield absoluteness, this statement is also true in $M[G]$, as this model has the same countable ordinals. So statement $2$ holds.
\end{proof}

It follows that $\necessary\varphi(a)$ is true in a countable $\omega$-nonstandard model of set theory $M$ just in case there is some ordinal $\beta$ and some standard number $k$ such that $M$ thinks that in the collapse forcing extension $M[G]$ making $V_\beta^M$ countable, there is no top-extension of $\langle V_\beta^M,\in\rangle$ to a model $\<N,\in>$ of $\ZFC_k$ in which $\neg\varphi(a)$ is true. In particular, every instance of necessity, for an assertion of any complexity, is true because a certain $\Sigma_2$ fact is true in $M$, namely, the existence of a $\beta$ for which the collapse forcing forces a certain fact about a certain standard natural number $k$.

Following the ideas of~\cite{HamkinsLinnebo:Modal-logic-of-set-theoretic-potentialism}, let us say that a countable model of set theory $M$ satisfies the \emph{top-extensional maximality principle}, if for any sentence $\varphi$ in the language of set theory, if there is a top extension $N$ of $M$ in which $\varphi$ is true and remains true in all further top-extensions of $N$, then $\varphi$ was already true in $M$. In modal terms, the top-extensional maximality principle is expressed by the scheme of assertions $\possible\necessary\varphi\to\varphi$, where the modal operators refer to top-extensional possibility and top-extensional necessity, respectively.

\begin{theorem}\label{Theorem.Maximality-principle}
  If there is a model of \ZFC, then there is a model of \ZFC\ satisfying the top-extensional maximality principle. Indeed, any countable model of \ZFC\ satisfying a maximal $\Sigma_2$ theory will satisfy the top-extensional maximality principle.
\end{theorem}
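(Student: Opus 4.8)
The plan is to prove the second, stronger assertion first---that any countable model of $\ZFC$ with a maximal $\Sigma_2$ theory satisfies the top-extensional maximality principle---and then to derive the first from it. The decisive tool is already in hand from the paragraph following the possibility-characterization lemma~\ref{Lemma.Possibility-characterization}: in any countable $\omega$-nonstandard model, necessity is witnessed by a $\Sigma_2$ fact. Concretely, for each standard $k$ let $\theta_k$ abbreviate the assertion
\[
  \exists\beta\ \bigl[\text{the collapse forcing making } V_\beta \text{ countable forces that } \langle V_\beta,\in\rangle \text{ has no top-extension to a model of } \ZFC_k+\neg\varphi\bigr],
\]
which has complexity $\Sigma_2$, and recall that $\necessary\varphi$ holds in such a model exactly when $\theta_k$ holds there for some standard $k$. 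I would begin by noting that a model $M$ with a maximal $\Sigma_2$ theory is automatically $\omega$-nonstandard, by Corollary~\ref{Corollary.No-omega-standard-maximal-Sigma_2-theory}, so that the characterization applies to $M$ and, since top-extensions preserve $\omega$, to all its (countable, $\omega$-nonstandard) top-extensions.

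To verify the scheme $\possible\necessary\varphi\to\varphi$ in such an $M$, suppose $M\satisfies\possible\necessary\varphi$ and fix a countable top-extension $N$ of $M$ with $N\satisfies\necessary\varphi$. Applying the characterization inside $N$, I read off a \emph{single standard} $k$ for which $N\satisfies\theta_k$. The crux is then to push $\theta_k$ back down into $M$. Because $N$ is a top-extension of $M$ and $\Sigma_2$ truths are preserved upward under top-extensions, every $\Sigma_2$ sentence true in $M$ remains true in $N$; hence $N$ is a model of $\ZFC$ together with the entire $\Sigma_2$ theory of $M$ and the sentence $\theta_k$. This exhibits $\theta_k$ as consistent with $\ZFC$ plus the $\Sigma_2$ theory of $M$. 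By the maximality of the $\Sigma_2$ theory of $M$, the $\Sigma_2$ sentence $\theta_k$ is therefore already true in $M$. Running the characterization now in $M$, and using that $k$ is standard, I conclude $M\satisfies\necessary\varphi$; and since $M$ is (trivially) a top-extension of itself, reflexivity of the accessibility relation gives $M\satisfies\varphi$. As $\varphi$ was arbitrary, $M$ satisfies the top-extensional maximality principle.

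The first assertion then follows quickly: assuming $\ZFC$ is consistent, Observation~\ref{Observation.Maximal-Sigma_2-theory} produces a consistent theory all of whose models have a maximal $\Sigma_2$ theory, and by the \Lowenheim--Skolem theorem this theory has a countable model $M$, to which the case just proved applies. The point I expect to require the most care is the handling of the index $k$: the equivalence ``$\necessary\varphi$ iff $\theta_k$ for some standard $k$'' is, across standard $k$, an infinite disjunction rather than a single $\Sigma_2$ sentence, so one must first extract one fixed standard $k$ as a genuine witness in $N$ before invoking $\Sigma_2$-maximality, and then feed that very same standard $k$ back into the characterization over $M$. Everything else---the upward $\Sigma_2$-absoluteness under top-extensions, the reflexivity used for the $T$-axiom step, and the $\omega$-nonstandardness of $M$---is routine given the earlier results.
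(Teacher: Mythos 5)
Your proposal is correct and follows essentially the same route as the paper's own proof: obtain a (countable, necessarily $\omega$-nonstandard) model with a maximal $\Sigma_2$ theory via Observation~\ref{Observation.Maximal-Sigma_2-theory}, use Lemma~\ref{Lemma.Possibility-characterization} to express $\necessary\sigma$ in the witnessing top-extension as a $\Sigma_2$ assertion with a fixed standard $k$, and then pull that $\Sigma_2$ assertion down into $M$ by $\Sigma_2$-maximality. Your explicit care in fixing a single standard $k$ before invoking maximality is exactly the point the paper's argument relies on as well.
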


\begin{proof}
If there is any model of \ZFC, then observation~\ref{Observation.Maximal-Sigma_2-theory} shows that there is a model of set theory $M$ with a maximal $\Sigma_2$ theory. Fix any such model $M$. It must be $\omega$-nonstandard, since otherwise there are new $\Sigma_2$ sentences that could become true in a top extension, concerning the size or number of successful stages in the universal finite set. To verify the maximality principle in $M$, suppose that $M\satisfies\possible\necessary\sigma$, meaning that there is some top-extension of $M$ to a model $M^+$ in which $\sigma$ holds and continues to hold in all further top-extensions of $M^+$. Since $M^+\satisfies\necessary\sigma$, this means that $\neg\sigma$ is not possible over $M^+$, and so by lemma~\ref{Lemma.Possibility-characterization} there is some ordinal $\beta$ and standard number $k$ such that in the collapse extension of $M^+$ making $V_\beta^{M^+}$ countable, there is no top-extension of $\langle V_\beta^{M^+},\in^{M^+}\rangle$ to a model $\langle N,\in^N\rangle$ satisfying $\ZFC_k$ plus $\neg\sigma$. The existence of such an ordinal $\beta$ is a $\Sigma_2$ assertion that is true in $M^+$ and therefore consistent with the $\Sigma_2$ theory of $M$. By the maximality of the $\Sigma_2$ theory of $M$, it follows that there is such an ordinal $\beta$ already in $M$. Therefore, by lemma~\ref{Lemma.Possibility-characterization} again it follows that $M\satisfies\necessary\sigma$ and in particular, $\sigma$ is true in $M$. So $M$ satisfies the top-extensional maximality principle.
\end{proof}

The proof really shows that if a countable model of set theory $M$ satisfies the maximality principle for $\Sigma_2$ sentences, then it satisfies the maximality for all sentences in the language of set theory. The main theorem shows that no countable model of set theory $M$ satisfies the top-extensional maximality principle with respect to assertions allowing parameters from $M$, even merely natural number parameters.

\begin{corollary}
 If \ZFC\ is consistent, then there are models of \ZFC\ whose top-extensional potentialist validities, with respect to sentences in the language of set theory, are exactly \theoryf{S5}. $$\Val(M,\mathcal{L}_\in)=\theoryf{S5}.$$
\end{corollary}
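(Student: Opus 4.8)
The plan is to exhibit such a model among those already produced in Theorem~\ref{Theorem.Maximality-principle}. Since \ZFC\ is consistent, I can fix a countable model $M\satisfies\ZFC$ with a maximal $\Sigma_2$ theory; as noted there, $M$ is necessarily $\omega$-nonstandard, and it satisfies the top-extensional maximality principle, the scheme $\possible\necessary\sigma\to\sigma$ for every sentence $\sigma$. Since Theorem~\ref{Theorem.Validities-of-model-with-parameters-are-S4} already gives the upper bound $\Val(M,\mathcal{L}_\in)\of\theoryf{S5}$, it suffices to establish the reverse inclusion $\theoryf{S5}\of\Val(M,\mathcal{L}_\in)$.

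The key structural point I would isolate is that the property of carrying a maximal $\Sigma_2$ theory is inherited by every top-extension. Indeed, if $N$ is any top-extension of $M$, then every $\Sigma_2$ sentence true in $M$ remains true in $N$, because $\Sigma_2$ truths are locally verified in some $V_\theta$ and hence persist upward. Thus $N$ models the full $\Sigma_2$ theory $T$ of $M$; but $T$ is a maximal consistent $\Sigma_2$ extension of \ZFC, so $N$ can satisfy no further $\Sigma_2$ sentence without contradicting maximality, and therefore the $\Sigma_2$ theory of $N$ is again exactly $T$. Consequently every world in the cone of top-extensions above $M$ has a maximal $\Sigma_2$ theory and so, by Theorem~\ref{Theorem.Maximality-principle}, satisfies the maximality principle.

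With this uniformity in hand, the remainder is modal bookkeeping. Restricting attention to the cone $\mathcal{C}$ of top-extensions above $M$, the collection of modal assertions valid throughout $\mathcal{C}$ is a normal modal logic: it contains \theoryf{S4} because the frame is reflexive and transitive, and it is closed under modus ponens and necessitation, the latter precisely because the worlds witnessing $\possible$ and $\necessary$ at any $N\in\mathcal{C}$ again lie in $\mathcal{C}$. Since every $N\in\mathcal{C}$ satisfies the maximality principle, every substitution instance of the axiom $\possible\necessary p\to p$ is valid throughout $\mathcal{C}$. Substituting $\neg p$ and using $\necessary\neg p\equiv\neg\possible p$ turns this axiom into the Brouwer axiom $p\to\necessary\possible p$; as \theoryf{S5} is axiomatized as \theoryf{S4} together with Brouwer, it follows that \theoryf{S5} is contained in the validities of $\mathcal{C}$, and in particular in $\Val(M,\mathcal{L}_\in)$. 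Combined with the upper bound, this yields $\Val(M,\mathcal{L}_\in)=\theoryf{S5}$.

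The step I expect to require the most care is the closure of the validities under necessitation, which is what forces the passage from the single model $M$ to the whole cone $\mathcal{C}$: without knowing that the maximality principle persists to every top-extension, one could not conclude that a formula valid at $M$ stays valid at the worlds accessible from $M$, and the inductive derivation of the \theoryf{S5} theorems would break down. The upward-preservation lemma for maximal $\Sigma_2$ theories is exactly what repairs this, and everything else reduces to the standard soundness of \theoryf{S4} in reflexive transitive frames together with the elementary derivation of Brouwer from the maximality axiom.
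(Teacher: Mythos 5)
Your overall route is the same as the paper's, which simply combines theorem~\ref{Theorem.Maximality-principle} (a model with a maximal $\Sigma_2$ theory satisfies the top-extensional maximality principle, giving \theoryf{S5} as a lower bound) with the upper bound from theorem~\ref{Theorem.Validities-of-model-with-parameters-are-S4}. What you add, and what the paper leaves implicit, is the observation that a maximal $\Sigma_2$ theory persists to every top-extension---$\Sigma_2$ truths go upward, and maximality forbids any new ones---so the maximality principle holds at every world of the cone above $M$. That persistence argument is correct and is exactly what makes the necessitation rule available in your modal bookkeeping.

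The one step that does not yet go through as written is the final inference from ``\theoryf{S4} plus all sentence-instances of the Brouwer axiom are valid on the cone'' to ``all of \theoryf{S5} is valid on the cone.'' The set of modal formulas all of whose \emph{sentence}-substitution instances hold at every world of the cone is closed under modus ponens and necessitation, but it is not obviously closed under uniform substitution, and a Hilbert derivation of a general \theoryf{S5} theorem uses Brouwer instances $\chi\to\necessary\possible\chi$ in which $\chi$ is a modal compound of the substituted sentences (for example, $\necessary\psi\to\necessary\possible\necessary\psi$). The maximality principle as defined delivers these only for $\chi$ a sentence of the language of set theory, and unlike the forcing modality, the top-extensional $\possible$ is not obviously expressible in that language, so you cannot simply absorb the modal compounds into the substitution language. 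The repair is available from your own setup: by lemma~\ref{Lemma.Possibility-characterization}, $\necessary\sigma$ at a world of the cone is witnessed by a $\Sigma_2$ fact (the existence of an ordinal $\beta$ with the no-extension property for some standard $k$), so since all worlds of the cone share one and the same maximal $\Sigma_2$ theory, $\necessary\sigma$ has a constant truth value across the cone for each sentence $\sigma$; an easy induction then shows that every modal compound of sentences is equivalent, uniformly across the cone, to a sentence, after which your derivation of \theoryf{S5} goes through in full.
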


\begin{proof}
The maximality principle amounts to \theoryf{S5} as a lower bound, and we already have \theoryf{S5} as an upper bound by theorem~\ref{Theorem.Validities-of-model-with-parameters-are-S4}.
\end{proof}

\printbibliography

\end{document}